\DeclareMathOperator{\Rep}{Rep}
\DeclareMathOperator{\End}{End}
\DeclareMathOperator{\Hom}{Hom}
\DeclareMathOperator{\im}{i}
\renewcommand{\dim}{\operatorname{dim}}
\newcommand{\uHom}{\underline{\Hom}}
\newcommand{\ve}{\varepsilon}
\newcommand{\FPdim}{{\rm FPdim}}
\newcommand{\C}{\mathbb C}
\newcommand{\Z}{\mathbb Z}
\newcommand{\Q}{\mathbb Q}
\newcommand{\F}{\mathbf{F}}
\newcommand{\CC}{\mathcal{C}}
\newcommand{\E}{\mathcal{E}}
\newcommand{\D}{\mathcal{D}}
\newcommand{\TY}{\mathcal{TY}}
\newcommand{\N}{\mathbb N}
\newcommand{\ot}{\otimes}
\newcommand{\B}{\mathcal{B}}
\newcommand{\op}{\oplus}
\newcommand{\om}{\omega}
\newcommand{\lan}{\langle}
\newcommand{\ra}{\rangle}
\newcommand{\la}{{\lambda}}
\newcommand{\al}{\alpha}
\newcommand{\one}{\mathbf{1}}
\newcommand{\GT}{\mathcal{GT}}
\newcommand{\ts}{\tilde{s}}
\newcommand{\zero}{{\mathbf 0}}
\newcommand{\ga}{\gamma}
\newcommand{\g}{\mathfrak{g}}
\newcommand{\so}{\mathfrak{so}}
\newcommand{\ssl}{\mathfrak{sl}}
\newtheorem{thm}{Theorem}[section]
\newtheorem*{conjno}{Conjecture}
\newtheorem{lem}[thm]{Lemma}
\newtheorem{prop}[thm]{Proposition}
\newtheorem{cor}[thm]{Corollary}
\theoremstyle{definition}
\newtheorem{defn}[thm]{Definition}
\newtheorem{example}[thm]{Example}
\newtheorem{conj}[thm]{Conjecture}
\newtheorem{question}[thm]{Question}
\newtheorem{remark}[thm]{Remark}
\begin{document}

\title[A finiteness property]
{A finiteness property for braided fusion categories}
\author{Deepak Naidu}
\address{Department of Mathematics,
    Texas A\&M University, College Station, TX 77843, USA}
\email{dnaidu@math.tamu.edu}
\author{Eric C. Rowell}
\address{Department of Mathematics,
    Texas A\&M University, College Station, TX 77843, USA}
   \email{rowell@math.tamu.edu}
\thanks{The second author is partially supported by NSA grant H98230-08-1-0020.
 We thank Dmitri Nikshych, Victor Ostrik and Michael Larsen for useful discussions.}
\begin{abstract}
We introduce a finiteness property for braided fusion categories, describe a conjecture that would characterize categories possessing this, and verify the conjecture in a number of important cases.  In particular we say a category has \emph{property} \textbf{F} if the associated braid group representations factor over a finite group, and suggest\index{\footnote{}} that categories of integral Frobenius-Perron dimension are precisely those with property \textbf{F}.
\end{abstract}

\maketitle

\section{Introduction}
Given an object $X$ in a braided fusion category $\CC$ one may construct a family of braid group representations via the homomorphism $\C\B_n\rightarrow \End(X^{\ot n})$ defined on the braid group generators $\sigma_i$ by
$$\sigma_i\rightarrow Id_X^{\ot i-1}\ot c_{X,X}\ot Id_X^{\ot n-i-1}$$
where $c_{X,X}$ is the braiding on $X\ot X$.  In this paper we consider the problem of determining when the images of these representations are finite groups.
We will say a category $\CC$ has \textit{property $\mathbf{F}$} if all such braid representations factor over finite groups.  Various cases related to quantum groups at roots of unity, Hecke and BMW algebras, and finite group doubles have been studied in the literature, see \cite{ERW,FRW,FLW,jones86,jonescmp,LR,LRW}. The evidence found in these papers partially motivates (see also \cite[Section 6]{RSW}):
\begin{conjno}
 A braided fusion category $\CC$ has property $\mathbf{F}$ if, and only if, the Frobenius-Perron dimension $\FPdim(\CC)$ of $\CC$ is an integer, (\textit{i.e.} $\CC$ is \emph{weakly integral}).
\end{conjno}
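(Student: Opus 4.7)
The plan is to address the two directions of the conjecture separately, since they appear to require quite different techniques, and in each direction to reduce to a manageable list of building blocks.

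For the reverse direction (weakly integral implies property $\mathbf{F}$), the strategy would be structural. One would first try to reduce to the integral case, for example by exploiting the grading structure available on weakly integral fusion categories together with a direct analysis of the simple objects carrying non-integral dimensions (those of Tambara--Yamagami or Ising type). In the integral case, the goal is to invoke the principle that integral braided fusion categories are governed by (twisted) group-theoretical data: concretely, one would aim to embed such a category, up to equivalences that preserve property $\mathbf{F}$, into the representation category of a twisted Drinfeld double $D^{\omega}(G)$ of a finite group $G$, for which the braid group representations factor through a finite quotient essentially by construction. The base case consists of the pointed braided categories, where the representations are determined by pre-metric-group data and are known to factor through finite extensions of $\mathrm{SL}_2(\Z/N\Z)$.

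For the forward direction (property $\mathbf{F}$ implies weakly integral), the strategy would be arithmetic. If the image of $\C\B_n$ on $\End(X^{\ot n})$ is finite, then every eigenvalue of the braiding $c_{X,X}$ is a root of unity, and the categorical Markov trace of every braid word lies in a cyclotomic field. Combining this with the fact that $\FPdim(X)^n$ appears as a specific trace value, and with Verlinde-type formulae expressing such traces in terms of the $S$- and $T$-matrices of the Drinfeld center of $\CC$, one aims to deduce that $\FPdim(X)^2 \in \Q$ for every simple object $X$. By the Etingof--Nikshych--Ostrik characterization, this rationality is equivalent to $\CC$ being weakly integral.

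The chief obstacle is the forward direction: there is no evident mechanism by which finiteness of all braid images forces the Frobenius--Perron dimensions to be integral, and the step converting cyclotomic trace values into rationality of squared dimensions seems to require a genuinely new ingredient, perhaps a Galois-theoretic analysis of the modular data in the style of results on $\mathrm{SL}_2(\Z)$-representations attached to modular categories. A secondary difficulty is that property $\mathbf{F}$ is not known a priori to be preserved under Morita equivalence of braided fusion categories or under passage to a Drinfeld center, so each such reduction step used in the reverse direction must itself be independently established before the proposed chain of implications closes.
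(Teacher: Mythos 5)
The statement you are addressing is stated in the paper as an open conjecture: the paper does not prove it, proves only the special cases collected in Theorem \ref{summary}, and explicitly declines to consider the ``only if'' direction at all. Your text is accordingly a research program rather than a proof, and each half contains a gap that is not merely technical. For the ``if'' direction, the pivotal reduction --- embedding an integral braided fusion category, up to property-$\F$-preserving operations, into $\Rep(D^\omega G)$ --- is unavailable: group-theoretical categories are integral, but the converse fails. The paper cites Nikshych's examples of integral non-group-theoretical braided fusion categories, and Example \ref{su3ex} exhibits an integral modular category of dimension $36$, namely $\CC(\ssl_3,e^{\pi\im/6},6)$, that is provably not group-theoretical (via Proposition \ref{dgnoprop2}). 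Thus the only general sufficient criterion in sight, Proposition \ref{erwprop}, does not cover even the integral case; the paper further remarks that no extension of its proof to the weakly group-theoretical setting is known, and that it is open whether every weakly integral category is weakly group-theoretical. Your proposed base case and grading reduction therefore do not close the argument.

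For the ``only if'' direction the proposed arithmetic mechanism cannot work as stated. By Etingof--Nikshych--Ostrik, the dimensions of simple objects of \emph{any} fusion category already lie in a cyclotomic field, so the observation that finite braid image forces eigenvalues of $c_{X,X}$ to be roots of unity and trace values to be cyclotomic imposes no constraint beyond what holds universally; it cannot distinguish $\FPdim(X)^2\in\Q$ from, say, $\FPdim(X)=(1+\sqrt5)/2$, which is itself a cyclotomic integer. Some genuinely new input (you suggest a Galois-theoretic analysis of modular data) would be required, and none is supplied. In short, both implications remain open; what survives of your outline is essentially Proposition \ref{erwprop} together with the case-by-case verifications of Sections \ref{2:qgcats}--\ref{4:tycats}, which is precisely the evidence the paper itself offers for the conjecture.
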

In Section \ref{1:propf} we provide further details and some preliminary evidence supporting the conjecture.  For the moment we state an example \cite{FLW, jones86} (associated with quantum groups of type $A$) which supports the conjecture.
The braided fusion category $\CC(\mathfrak{sl}_2, q, \ell)$ (see Section~\ref{2:qgcats} for notation)
has property $\mathbf{F}$ if, and only if, $\ell \in \{2,3,4,6\}$. 
On the other hand,  $\CC(\mathfrak{sl}_2, q, \ell)$ is weakly integral
if, and only if, $\ell \in \{2,3,4,6\}$. For $\ell=4,6$ these categories are non-integral,
possessing simple objects of dimension $\sqrt{2}$ and $\sqrt{3}$ respectively.

Without a fairly explicit description of the algebras $\End(X^{\ot n})$ and the action of $\B_n$, verifying that a given braided fusion category $\CC$ has property $\F$ is generally not feasible.  Even if such a description is available, determining the size of the image can be difficult task.  On the other hand, showing that $\CC$ fails to have property $\F$ can sometimes be done with less effort, as one need only show that the image of $\B_3$ is infinite.  Assuming that $X^{\ot 3}$ has at most 5 simple subobjects, knowledge of the eigenvalues of $\sigma_1$ is essentially all one needs to determine if the image of $\B_3$ is infinite: criteria are found in \cite{RT}.  This is particularly effective for ribbon categories associated with quantum groups, see \cite{jones86,FLW,LRW}.

Verifying property $\F$ becomes more manageable under the stronger hypothesis that $\FPdim(X)\in\N$ for each $X$, \emph{i.e.} for \emph{integral} braided fusion categories $\CC$.  By \cite[Theorem 8.33]{ENO} any integral fusion category is $\Rep(H)$ for a finite dimensional semisimple quasi-Hopf algebra $H$.  In this paper we focus on verifying property $\F$ under this additional hypothesis, making use of \cite[Corollary 4.4]{ERW}: \textit{braided group-theoretical fusion categories have property $\F$.} We do not consider the ``only if'' direction of the conjecture here.

There are two main sources of weakly integral braided fusion categories in the literature: Drinfeld centers of Tambara-Yamagami categories $D\TY(A,\chi,\tau)$ (see \cite{Iz,GNN} and Section \ref{4:tycats} below), and quantum group type modular categories
$\CC(\mathfrak{so}_N,q,\ell)$ where $\ell=N$ or $2N$ if $N$ is even or odd respectively (see e.g. \cite{Gan} and Section \ref{2:qgcats} below). The main results of Sections \ref{2:qgcats}, \ref{3:class} and \ref{4:tycats} are summarized in:
\begin{thm}\label{summary}
 Suppose that $\CC$ is a braided \textbf{integral} fusion category and:
\begin{enumerate}
 \item[(i)] all simple objects $X$ are self-dual and $\FPdim(X)\in\{1,2\}$ or
\item[(ii)] $\CC$ is modular with $\FPdim(\CC)\in [1,35]\cup\{pq^2,pq^3\}$, $p\not= q$ primes or
\item[(iii)] $\CC=\CC(\mathfrak{so}_N,q,\ell)$ with $\ell=N$ for $N$ even and $\ell=2N$ for $N$ odd or
\item[(iv)] $\CC=D\TY(A,\chi,\tau)_+$, the trivial component of $D\TY(A,\chi,\tau)$ (under the $\Z/2\Z$-grading)
\end{enumerate}
then $\CC$ has property $\F$.
\end{thm}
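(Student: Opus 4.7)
The unifying tool for all four cases is the result already cited in the introduction: any braided \emph{group-theoretical} fusion category has property $\F$ \cite[Cor.~4.4]{ERW}. My plan in each of (i)--(iv) is therefore to establish that the stated hypotheses force $\CC$ to be group-theoretical, after which the theorem follows at once.

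For (i), the combination of self-duality together with $\FPdim(X)\in\{1,2\}$ for every simple $X$ is very restrictive: the fusion structure can only be assembled from pointed and Ising-type blocks. I would enumerate the possibilities and in each case exhibit $\CC$ explicitly as either a pointed category, an equivariantization of a pointed category by a finite abelian group, or the Drinfeld centre of such --- all of which are group-theoretical.

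For (ii), the plan is dimension by dimension. Any integral modular category whose Frobenius--Perron dimension is a prime power is known to be group-theoretical, which disposes of the prime-power entries in $[1,35]$. For the remaining composite dimensions in $[1,35]$ and for $\FPdim(\CC)\in\{pq^2,pq^3\}$ with $p\neq q$ prime, a counting argument on the Frobenius--Perron dimensions of the simple objects should produce a nontrivial Tannakian subcategory $\Rep(K)\subseteq \CC$; passing to the de-equivariantization $\CC_K$ yields a strictly smaller integral braided fusion category, and an induction on $\FPdim(\CC)$ closes the argument.

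For (iii), the fusion ring of $\CC(\so_N,q,\ell)$ at the boundary levels $\ell=N$ ($N$ even) and $\ell=2N$ ($N$ odd) is integral and explicitly computable; I would exhibit a Lagrangian Tannakian subcategory of $\CC$, whereupon $\CC$ is realised as the Drinfeld centre of a pointed fusion category, hence group-theoretical. For (iv), the trivial component $D\TY(A,\chi,\tau)_+$ inherits from the description of $D\TY(A,\chi,\tau)$ in \cite{GNN} an explicit Lagrangian subcategory built from the pointed data $(A,\chi)$, again yielding group-theoreticity and therefore property $\F$. The principal obstacle I expect lies in (ii): for each composite dimension in the list not already covered in the literature, ruling out hypothetical non-group-theoretical integral modular examples via the dimension-count / Tannakian-subcategory mechanism will have to be carried out case by case, and some dimensions may demand additional constraints extracted from the modular data.
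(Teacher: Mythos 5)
Your overall strategy --- reduce each case to group-theoreticity and then invoke Proposition \ref{erwprop} --- is exactly the paper's. But two of your four mechanisms have genuine gaps. In case (ii) the paper does not produce Tannakian subcategories or de-equivariantize at all: it proves (Propositions \ref{pq^2} and \ref{pq^3}) that an integral modular category of dimension $pq^2$ or $pq^3$ is necessarily \emph{pointed}, by noting that non-invertible simples must have dimension $q$, counting to pin down $\dim(\CC_{pt})$, applying M\"uger's formula $\dim(\D)\dim(\D')=\dim(\CC)$ to $\CC_{pt}$, and using that the universal grading group of a modular category is the group of invertibles, so the components of the universal grading have equal dimension. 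Your step ``a counting argument should produce a nontrivial Tannakian subcategory'' is the entire difficulty and is left unjustified; moreover the induction it feeds is delicate, since group-theoreticity of integral modular categories already fails at dimension $36$ (Example \ref{su3ex}), so ``strictly smaller integral braided fusion category'' is not by itself a usable inductive hypothesis.

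In case (iv) the Lagrangian subcategory you propose to extract from the pointed data $(A,\chi)$ does not exist in general: a Lagrangian subgroup of $(A,\chi)$ exists only for special $A$ (never, for instance, when $|A|$ is odd and not a perfect square), and that hypothesis is exactly what drives Proposition \ref{DTY g-t} for the \emph{full} category $D\TY(A,\chi,\tau)$. The content of Proposition \ref{DTY_+ is g-t} is that the trivial component is group-theoretical \emph{unconditionally}; the paper proves this by exhibiting the algebra $\one\oplus X_{e,-1}$ and checking that its bimodule category is pointed (equivalently, by identifying $D\TY_+$ with a $\Z/2\Z$-equivariantization of a pointed category and citing \cite{Nik}). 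A similar caution applies to (iii): your Lagrangian idea essentially works for type $B$, but for $\CC(D_r)$ with $r$ an odd perfect square the natural symmetric subcategory is \emph{not} equal to its centralizer, which is why the paper uses the weaker criterion $(\mathcal{L}')_{ad}\subset\mathcal{L}$ of Proposition \ref{dgnoprop2} rather than Lagrangian-ness. Finally, in (i) the hypothesis does not assume $\CC$ is singly generated; the paper first reduces to singly generated subcategories via Lemma \ref{genlem} (braidedness supplying commutativity of the Grothendieck ring), then classifies the fusion rules as those of $\Rep(D_n)$ --- there are no integral ``Ising-type blocks'' --- and proves group-theoreticity by an explicit Morita equivalence to a pointed category rather than by realizing $\CC$ as an equivariantization or a center.
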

Note that in (iii) $\ell/2$ must be a perfect square.

To be conservative, our results provide evidence for a weak form of one direction of Conjecture \ref{mainconj}.  While these results are of interest in the representation theory of finite dimensional Hopf algebras, quantum groups and fusion categories generally, the strong form of the conjecture has some far-reaching connections to quantum computing, complexity theory, low-dimensional topology and condensed matter physics.  The interested reader can find details in the survey articles \cite{Das} and \cite{twoparas}.  Roughly, the connections are as follows.  Any (unitary) modular category provides both $\C$-valued multiplicative link invariants (e.g. the Jones polynomial) and a model for a (theoretical) 2-dimensional physical system (e.g. fractional quantum Hall liquids).  A topological quantum computer would be built upon such a physical system and would (probabilistically) approximate the link invariants in polynomial time.  Now the (finite, infinite) dichotomy of braid group image seems to correspond to similar dichotomies in quantum computing (weak, powerful) and computational complexity of link invariants (easy, hard).  By a ``powerful'' quantum computer we mean \emph{universal} and the corresponding (classical) computational complexity class is $\#P$-hard (where the last dichotomy assumes $P\not= NP$).

\section{The Property $\mathbf{F}$ Conjecture}\label{1:propf}

\begin{defn}
 A braided fusion category $\CC$ has \emph{property} $\mathbf{F}$ if the
associated braid group representations on the centralizer algebras $\End(X^{\ot n})$ have finite image
for all $n$ and all objects $X$.
\end{defn}

Recall that $\dim(\CC)$ is the sum of the squares of the categorical dimensions of (isomorphism classes of) simple
objects.  The Frobenius-Perron dimension (see \cite{ENO}) of a simple object $\FPdim(X)$ is defined to be the largest positive eigenvalue of the fusion matrix of $X$, i.e. the matrix representing $X$ in the left regular representation of the Grothendieck semiring $Gr(\CC)$ of $\CC$.  Similarly, $\FPdim(\CC)$ is the sum of the squares of the Frobenius-Perron dimensions of (isomorphism classes of) simple objects.  We say that the category $\CC$ is \emph{pseudo-unitary} if $\FPdim(\CC)=\dim(\CC)$, which is indeed the case when $\CC$ is unitary (see e.g. \cite{wenzlcstar}).

\begin{defn}
 A fusion category $\CC$ is called \emph{weakly integral} if $\FPdim(\CC)\in\N$, and \emph{integral} if $\FPdim(X)\in\N$ for each simple object $X$.
\end{defn}
It is known  (see e.g. \cite[Proposition 8.27]{ENO}) that $\CC$ is weakly integral if and only if
$\FPdim(X)^2\in\N$ for all simple objects $X$.
We can now state:
\begin{conj}\label{mainconj}
A unitary ribbon category $\CC$ has property $\mathbf{F}$ if, and only if,
$\dim(\CC)\in\N$.  More generally, a braided fusion category has property
$\mathbf{F}$ if, and only if, $\CC$ is weakly integral.
\end{conj}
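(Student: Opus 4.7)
The plan is to handle the two implications separately, reducing each to situations where existing tools apply. I would not expect a uniform proof to cover both directions simultaneously, since the ``if'' direction is essentially algebraic/structural while the ``only if'' direction is analytic/arithmetic in flavor.

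For the direction ``weakly integral $\Rightarrow$ property $\mathbf{F}$'', the strategy is to reduce to the theorem of \cite{ERW} that braided group-theoretical fusion categories have property $\mathbf{F}$. I would split into two subcases. In the integral case, \cite[Theorem 8.33]{ENO} gives $\CC \simeq \Rep(H)$ for a finite dimensional semisimple quasi-Hopf algebra $H$, and the main step is to show that any integral braided fusion category is group-theoretical. The natural approach is induction on $\FPdim(\CC)$, exploiting the braiding to locate a Tannakian subcategory $\Rep(G)\subset\CC$ and de-equivariantizing by $G$, then invoking the nilpotent/solvable filtration available in the integral setting. In the weakly integral but non-integral case, $\CC$ carries a canonical faithful $\Z/2\Z$-grading with integral trivial component $\CC_0$, and I would propagate property $\mathbf{F}$ from $\CC_0$ to $\CC$ by analyzing the braid action on mixed tensor products, using the Drinfeld center of Tambara-Yamagami categories treated in part (iv) of Theorem \ref{summary} as the model computation.

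For the direction ``property $\mathbf{F}$ $\Rightarrow$ weakly integral'', I would argue contrapositively. Suppose some simple $X$ has $\FPdim(X)^2\notin\N$; I would aim to show that the image of $\B_n$ on $\End(X^{\ot n})$ is infinite for some $n$. When $X^{\ot 3}$ has at most five simple subobjects, the criteria of \cite{RT} reduce this to an arithmetic condition on the eigenvalues of $c_{X,X}$, which are controlled by the twists $\theta_Y$ on simple summands $Y\subset X^{\ot 2}$. For $\FPdim(X)$ irrational, these twists typically generate a cyclotomic field of sufficiently large conductor that the Galois-theoretic spacing of eigenvalues forces the Rowell-Tuba rationality conditions to fail. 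When $X^{\ot 3}$ has many simple subobjects, one would instead pass to a suitably large $n$ and use density arguments in the tower of centralizer algebras to reach the same conclusion.

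The main obstacle is the integral case of the first direction: it is an open problem whether every integral braided fusion category is group-theoretical, and if this fails, the clean reduction to \cite{ERW} is unavailable and one is forced into case-by-case analysis, exactly as in Theorem \ref{summary}(i)--(ii). A secondary but nontrivial difficulty is propagating property $\mathbf{F}$ across a $\Z/2\Z$-grading: even knowing $\CC_0$ has property $\mathbf{F}$, the braid representations on objects mixing the two components can, a priori, enlarge the image in uncontrolled ways, so the Tambara-Yamagami template has to be genuinely generalized rather than merely cited. The converse direction, finally, lacks any uniform structural mechanism at present: without a general density statement relating irrational Frobenius-Perron dimension to Lie-theoretic density of the braid image, the best currently available is verification within families (quantum groups of classical type, metaplectic modular categories), which is not a proof of the full conjecture.
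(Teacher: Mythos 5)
The statement you are trying to prove is Conjecture \ref{mainconj}; the paper does not prove it and says so explicitly (``our results provide evidence for a weak form of one direction'' and ``We do not consider the ``only if'' direction of the conjecture here''). What the paper actually establishes is Theorem \ref{summary}, a list of special cases, so there is no proof of the full statement to compare yours against. Your proposal is a strategy outline whose load-bearing steps you yourself flag as open, and it therefore does not constitute a proof; but beyond that, its central reduction is not merely open --- it is false.

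Concretely: for the ``weakly integral $\Rightarrow$ property $\mathbf{F}$'' direction you propose to show that every integral braided fusion category is group-theoretical and then invoke Proposition \ref{erwprop}. The paper explicitly notes that ``there are many examples of integral non-group-theoretical braided fusion categories (see \cite{Nik})'', and Example \ref{su3ex} exhibits one in detail: $\CC(\mathfrak{sl}_3,e^{\pi\im/6},6)$ is an integral modular category of dimension $36$ that fails the criterion of Proposition \ref{dgnoprop2} and hence is not group-theoretical, yet it does have property $\F$ (by an argument of Larsen not given in the paper). So the reduction to group-theoreticity cannot work in general, and some genuinely different mechanism is needed for integral non-group-theoretical categories; the paper points out that even the weakly group-theoretical generalization of Proposition \ref{erwprop} is not known. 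Your proposed propagation of property $\F$ from $\CC_0$ to $\CC$ across the canonical $\Z/2\Z$-grading in the weakly integral non-integral case is likewise unproven (the paper only handles specific families, namely $\CC(\so_N,q,\ell)$ and $D\TY(A,\chi,\tau)_+$, by direct computation of $\ts$-matrices and module categories), and your sketch of the converse direction via \cite{RT} eigenvalue criteria and density arguments is heuristic and is exactly the part the paper declines to address. In short, the statement remains a conjecture, and the one step of your plan that is stated as a definite claim rather than an admitted gap is contradicted by an example in the paper itself.
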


We note that in a sense property $\F$ is a property of objects: if we denote by $\CC[X]$ the full braided fusion subcategory generated by an object $X$ then it is clear that $\CC$ has property $\F$ if, and only if, $\CC[X]$ has property $\F$ for each object $X$.  We have the following (c.f. \cite[Lemma 2.1]{ERW}):
\begin{lem}\label{genlem}
Let $\mathcal{S} \subset \CC$ be a set of objects such that every simple object of $\CC$ is isomorphic to a subobject of $X^{\ot n}$ for some $X\in\mathcal{S}$ and $n\in\N$. Then $\CC$ has property $\F$ if, and only if, $\CC[X]$ has property $\F$ for each $X\in\mathcal{S}$.
\end{lem}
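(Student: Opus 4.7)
The forward direction is immediate: $\CC[X]\subseteq\CC$ is a braided fusion subcategory for each $X\in\mathcal{S}$, so property $\F$ restricts to it (the braid action on $\End(V^{\otimes n})$ for $V\in\CC[X]$ is precisely the restriction of the corresponding action in $\CC$).

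For the converse I would invoke the remark preceding the lemma, reducing the goal to showing that $\CC[Y]$ has property $\F$ for every object $Y\in\CC$. The key technical tool is a \emph{subobject principle} coming from naturality of the braiding: if $W$ is a subobject of $Z$ realized by an idempotent $p\in\End(Z)$, then $(p\otimes p)\circ c_{Z,Z}=c_{Z,Z}\circ(p\otimes p)$, so $p^{\otimes n}$ commutes with the $\B_n$-action on $Z^{\otimes n}$; hence $W^{\otimes n}=\mathrm{im}(p^{\otimes n})$ is $\B_n$-invariant and the braid image on $W^{\otimes n}$ is a quotient of the braid image on $Z^{\otimes n}$. In particular, property $\F$ for $\CC[Z]$ descends to $\CC[W]$ for any $W\subseteq Z^{\otimes k}$. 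Applied to a simple $W\subseteq X^{\otimes k}$ with $X\in\mathcal{S}$, this immediately gives property $\F$ for $\CC[W]$ for every simple $W$ of $\CC$.

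To handle an arbitrary $Y=\bigoplus_i W_i$ I would pick $X_i\in\mathcal{S}$ and $n_i$ with $W_i\subseteq X_i^{\otimes n_i}$, keep the distinct $X_i$'s and label them $X_1,\dots,X_r$, and realize $Y$ as a subobject of $T^{\otimes N}$ for $T=\mathbf{1}\oplus X_1\oplus\cdots\oplus X_r$ and $N=\max_i n_i$: each $X_i^{\otimes n_i}$ is a direct summand of $T^{\otimes N}$ via the sequence placing $n_i$ copies of $X_i$ in fixed positions and $\mathbf{1}$'s elsewhere, and these summands are disjoint for distinct $i$, so $Y\hookrightarrow T^{\otimes N}$. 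The subobject principle then reduces the task to showing that $\B_m$ acts finitely on $T^{\otimes m}$ for every $m$.

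This last reduction is the main obstacle, because $\sigma_i$ on $T^{\otimes m}$ acts via $c_{T,T}$ whose components include cross-braidings $c_{X_\alpha,X_\beta}$ with $\alpha\neq\beta$ that are not directly controlled by any individual $\CC[X_\alpha]$. My approach is to use the isotypic decomposition $T^{\otimes m}=\bigoplus_W W\otimes\Hom(W,T^{\otimes m})$: the braid action preserves it and by Schur's lemma acts only on the multiplicity spaces $\Hom(W,T^{\otimes m})$, so since only finitely many simples $W$ appear, it suffices to show each such action is finite. Using the hypothesis to embed each simple $W$ in some $X^{\otimes k}$, $X\in\mathcal{S}$, and recasting the multiplicity spaces by rigidity as $\Hom(\mathbf{1}, W^{*}\otimes T^{\otimes m})$, one can express the braid action on each multiplicity space as a restriction of a braid action on a tensor power of objects of $\mathcal{S}$ and their duals, to which property $\F$ for $\CC[X]$ applies. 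Assembling finitely many finite images across the isotypic components then yields the desired finiteness on $T^{\otimes m}$.
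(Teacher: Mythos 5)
Your forward direction and your ``subobject principle'' are correct and are essentially the paper's own argument: the paper splits the inclusion $q\in\Hom(Y,X)$ by semisimplicity, so $pq=Id_Y$ and $qp$ is an idempotent commuting with the braiding by functoriality, exhibiting the braid image on $\End(Y^{\ot n})$ as a quotient of that on $\End(X^{\ot n})$. This correctly disposes of every \emph{simple} object of $\CC$, since each lies in some $\CC[X]$ with $X\in\mathcal{S}$.

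The gap is in your final step. Property $\F$ for $\CC[X]$ controls only the braid representations on $\End(V^{\ot m})$ for $V$ an object of the \emph{single} subcategory $\CC[X]$. The $\B_m$-action on a multiplicity space $\Hom(\one,W^*\ot T^{\ot m})$ has matrix entries built from the mixed braidings $c_{X_\alpha,X_\beta}$ with $\alpha\neq\beta$, and a mixed word $X_{\alpha_1}\ot\cdots\ot X_{\alpha_m}$ is neither a tensor power of a single object (so the braid group does not act on it in the sense of the definition of property $\F$) nor an object of any single $\CC[X]$ in general. Hence the sentence ``one can express the braid action on each multiplicity space as a restriction of a braid action on a tensor power of objects of $\mathcal{S}$ and their duals, to which property $\F$ for $\CC[X]$ applies'' asserts precisely the statement that needs proof; it is the obstacle you correctly identified two sentences earlier, not a resolution of it. The passage to $T=\one\oplus X_1\oplus\cdots\oplus X_r$ also gains nothing, since $T$ is again a direct sum of objects drawn from different $\CC[X_i]$, so you have only reproduced the original problem (and, as a minor point, $Y\hookrightarrow T^{\ot N}$ needs care when $Y$ has repeated simple summands). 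For comparison, the paper's mechanism for direct sums is different: it restricts to the pure braid group $\mathcal{P}_n$, which has finite index in $\B_n$ and preserves each summand $W_{i_1}\ot\cdots\ot W_{i_n}$ of $Y^{\ot n}$, thereby reducing the question to finitely many block-diagonal actions; that (admittedly also only sketched) finite-index reduction is the ingredient your argument is missing and cannot be replaced by an appeal to property $\F$ for the individual generators.
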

\begin{proof} The ``only if" direction is clear.  Suppose that $\CC[X]$ has property $\F$ for each $X$ in a generating set, and let $Y$ be a subobject of $X$, with monomorphism $q\in\Hom(Y,X)$.  Since $\CC$ is semisimple, $q$ is split so that we have an epimorphism $p\in\Hom(X,Y)$ with
$pq=Id_Y$ and $(qp)^2=(qp)$.  As the braiding is functorial, we can use (tensor powers of) $p$ and $q$ to construct intertwining maps between $\End(Y^{\ot n})$ and $\End(X^{\ot n})$, and conclude that the braid group image on $\End(Y^{\ot n})$ is a quotient of the braid group image on $\End(X^{\ot n})$.  This shows that if $\CC[X]$ has property $\F$ for each $X$ is a generating set, then $\CC[X_i]$ has property $\F$ for each simple $X_i$.  Similar arguments (restricting to the pure braid group $\mathcal{P}_n$) show that the braid group acts by a finite group on direct sums so that $\CC$ has property $\F$.
\end{proof}

The following definition is not the original formulation of group-theoreticity, but is equivalent by a theorem of \cite{Nat}:
\begin{defn}
A fusion category $\CC$ is \emph{group-theoretical} if its Drinfeld
center $Z(\CC)$ is braided monoidally equivalent to the category of representations of the twisted double $D^\omega G$ of a finite group $G$.
\end{defn}
Group-theoretical categories are integral, but there are many examples of integral non-group-theoretical braided fusion categories (see \cite{Nik}).
Essentially the only general sufficient condition for property $\mathbf{F}$ is the following:

\begin{prop}[\cite{ERW}]\label{erwprop}
Braided group-theoretical categories have property $\mathbf{F}$.
\end{prop}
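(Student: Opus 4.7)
The plan is to reduce to the explicit category $\Rep(D^\om G)$ and then exploit the $G$-graded structure of its simple objects. By the definition of group-theoretical, $Z(\CC)$ is braided equivalent to $\Rep(D^\om G)$ for some finite group $G$ and 3-cocycle $\om$. Since $\CC$ is braided, the canonical functor $\CC \to Z(\CC)$, $X \mapsto (X, c_{-,X})$, is a braided embedding, so we may regard $\CC$ as a braided fusion subcategory of $\Rep(D^\om G)$. By Lemma \ref{genlem} applied to the (finite) set of isomorphism classes of simple objects of $\CC$, it suffices to show that each simple $X \in \Rep(D^\om G)$ has property $\F$, i.e.\ that $\B_n$ acts with finite image on $\End(X^{\ot n})$ for every $n$.

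I would then use the standard parametrization of simples in $\Rep(D^\om G)$: each is labeled by a pair $(\OO, \rho)$ with $\OO \subseteq G$ a conjugacy class and $\rho$ an irreducible projective representation of the centralizer $C_G(g)$ for some $g \in \OO$, twisted by a 2-cocycle built from $\om$. Such an $X$ carries a $G$-grading $X = \bigoplus_{h \in \OO} V_h$, and in $\Rep(D^\om G)$ the braiding has the concrete monomial form $c_{X,X}(v_g \ot w_h) = \mu(g,h)\,(g \cdot w_h) \ot v_g$, where $\mu(g,h)$ is a root of unity determined by $\om$ and $g \cdot w_h \in V_{ghg^{-1}}$. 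Fixing bases of the homogeneous components $V_h$ and taking the induced basis of $X^{\ot n}$, each generator $\sigma_i$ therefore acts by a monomial matrix.

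Consequently, the image of $\B_n$ in $\End(X^{\ot n})$ lies inside the group of monomial matrices whose nonzero entries are roots of unity, and to finish one needs to show that these entries all belong to a single finite cyclic subgroup $\mu_N \subset \C^\times$ independent of $n$; the image then embeds into the finite wreath product $\mu_N \wr S_{d^n}$ with $d = \dim(X)$. This reduces to the fact that $[\om] \in H^3(G, \C^\times)$ has finite order dividing $|G|$, so that $\om$ can be normalized to take values in the $|G|$-th roots of unity, and the projective twists on the $V_h$ can be chosen compatibly. The main obstacle I anticipate is the cocycle bookkeeping as one iterates braidings across a tensor power of length $n$: the associators (and hence $\om$) contribute extra scalar factors at each reassociation, and one must verify that those scalars do not drift outside the single fixed cyclic group $\mu_N$ as $n$ grows.
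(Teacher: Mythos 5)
The paper offers no proof of this proposition --- it is quoted verbatim from \cite[Corollary 4.4]{ERW} --- and your proposal is in essence a reconstruction of the argument given there: embed $\CC$ braided-fully-faithfully into $Z(\CC)\cong\Rep(D^\om G)$, then exploit the $G$-graded form of the braiding on objects of the twisted double. Two points deserve care. First, your claim that each $\sigma_i$ acts by a \emph{monomial} matrix is false in general: the homogeneous components $V_h$ of a simple object attached to $(\OO,\rho)$ have dimension $\dim\rho$, which can exceed $1$, and the map $w_h\mapsto g\cdot w_h$ is an arbitrary invertible linear map $V_h\to V_{ghg^{-1}}$, not a monomial one. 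The correct (and sufficient) statement is that $\sigma_i$ acts by a \emph{block}-permutation operator: it permutes the finitely many components $V_{g_1}\ot\cdots\ot V_{g_n}$ of $X^{\ot n}$ according to the Hurwitz-type action $(\ldots,g_i,g_{i+1},\ldots)\mapsto(\ldots,g_ig_{i+1}g_i^{-1},g_i,\ldots)$, and acts between matched components by an element of the image of $G^n$ in $\prod_h\GL(V_h)$ times a scalar in $\mu_N$; since $G$ is finite this whole collection of operators generates a finite group, so the image of $\B_n$ is finite, but it embeds in a wreath product of that finite matrix group (not merely $\mu_N$) with the symmetric group on the components. Second, your anticipated obstacle about cocycle drift is real but standard: $H^3(G,\C^\times)$ is annihilated by $|G|$, so $\om$ and the derived $2$-cochains on the centralizers may be normalized to take values in the $|G|$-th roots of unity, and all scalars appearing in the braiding and the associators on $X^{\ot n}$ then lie in a single cyclic group independent of $n$; this is exactly how \cite{ERW} handle the twisted case. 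With those two repairs your argument is complete and is the same route as the cited source.
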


There are a few other sufficient conditions for an integral fusion category to be group-theoretical available in
the literature.  We collect some of them in:

\begin{prop}\label{dgnoprop1}

Suppose $\CC$ is an integral fusion category.  Then $\CC$ is group-theoretical if:
\begin{enumerate}
 \item $\FPdim(\CC)=p^n$ \cite[Corollary 6.8]{DGNO}
\item $\FPdim(\CC)=pq$ \cite[Theorem 6.3]{EGO}, or
\item $\FPdim(\CC)=pqr$ \cite[Theorem 9.2]{ENO2}
\end{enumerate}
where $p,q$ and $r$ are distinct primes.
\end{prop}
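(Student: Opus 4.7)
The plan is to verify each of the three clauses separately by appealing to the cited references, since the proposition is essentially an assembly of results from \cite{DGNO}, \cite{EGO}, and \cite{ENO2}. I would structure the writeup as a brief guided tour: in each case state the structural property of $\CC$ that is first extracted from the dimension hypothesis, and then invoke the bridge from that property to group-theoreticity.

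For clause (1), the strategy is two-step. First, an integral fusion category of prime-power Frobenius--Perron dimension $p^n$ is nilpotent, since the Frobenius--Perron dimensions of all simple objects must be powers of $p$ and the adjoint-subcategory filtration terminates. Second, the structure theorem of \cite{DGNO} for braided integral nilpotent categories identifies the Drinfeld center $Z(\CC)$ as $\Rep(D^\omega G)$ for a finite $p$-group $G$ and some $3$-cocycle $\omega$, which is precisely the condition for $\CC$ to be group-theoretical.

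For clause (2), the $pq$ case from \cite{EGO} exploits the fact that the Frobenius--Perron dimensions of simple objects must lie in $\{1,p,q,pq\}$; a counting argument based on $\sum_i \FPdim(X_i)^2 = pq$ combined with information about the pointed subcategory pins down the structure enough to show that $\CC$ arises from a group-extension construction and hence is group-theoretical. Clause (3), the $pqr$ case from \cite[Section 9]{ENO2}, follows a similar philosophy but is significantly more intricate, relying on a Sylow-type decomposition for fusion categories together with the existence of a non-trivial symmetric subcategory in the relevant regime.

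The main obstacle, if one wished to give a self-contained proof, would be clause (3): the case analysis in \cite{ENO2} is lengthy and relies on several substantial auxiliary results. For our purposes, however, no new argument is required; we invoke each cited theorem as stated, which is enough to feed into Proposition \ref{erwprop} and deduce property $\mathbf{F}$ in each of these dimension regimes.
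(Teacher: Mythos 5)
Your proposal matches the paper exactly: Proposition \ref{dgnoprop1} is stated there with no proof beyond the three citations, so invoking \cite[Corollary 6.8]{DGNO}, \cite[Theorem 6.3]{EGO} and \cite[Theorem 9.2]{ENO2} as stated is precisely what the authors do. One small caution about your expository gloss on clause (1): the proposition carries no braiding hypothesis, and the argument in \cite{DGNO} does not pass through a Drinfeld-center/$\Rep(D^\omega G)$ structure theorem for braided categories, but rather through nilpotency of prime-power-dimensional fusion categories and the fact that such (cyclically) nilpotent categories are group-theoretical.
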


For the next criterion we need two definitions.  For any subcategory $\mathcal{D}\subset\CC$ of a braided fusion category denote by
$\mathcal{D}^\prime$ the \emph{centralizer} of $\mathcal{D}$, i.e. the subcategory consisting of objects
$Y$ for which $c_{X,Y}c_{Y,X}=Id_{X\ot Y}$ for all objects $X$ in $\mathcal{D}$.  By (a generalized version of) a theorem of M\"uger \cite{Mu1} this is equivalent to $\tilde{s}_{X,Y}=\dim(X)\dim(Y)$ for simple $X$ and $Y$ where $\tilde{s}$ is the normalized modular $S$-matrix (see Section \ref{2:qgcats}).  Also, following \cite{ENO} we define $(\mathcal{D})_{ad}$ to be the smallest fusion subcategory of $\CC$ containing $X\ot X^*$ for each simple object $X$ in $\mathcal{D}$.  In \cite{GN}, a fusion category $\mathcal{N}$ is defined to be \emph{nilpotent} if the sequence
$\mathcal{N}\supset\mathcal{N}_{ad}\subset (\mathcal{N}_{ad})_{ad}\supset\cdots$ converges to $Vec$ the fusion category of vector spaces.

Modular group-theoretical categories are characterized by:

\begin{prop}[\cite{DGNO}]\label{dgnoprop2}
A modular category $\CC$ is group theoretical if and only if it is integral
and there is a symmetric subcategory $\mathcal{L}$ such that $(\mathcal{L}^\prime)_{ad}\subset\mathcal{L}$.

\end{prop}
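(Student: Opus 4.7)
The claim is the modular incarnation of the Lagrangian-subcategory criterion of DGNO for group-theoreticity. I would give the standard two-direction argument passing through de-equivariantization.

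For the ``only if'' direction, suppose $\CC$ is modular and group-theoretical, so $Z(\CC)\simeq\Rep(D^\omega G)$. Since $\CC$ is modular, $Z(\CC)\simeq\CC\boxtimes\CC^{\mathrm{rev}}$; combining these with the characterization of \cite{Nat} one identifies $\CC$ itself as $Z(Vec_H^\eta)$ for some finite group $H$ and some $\eta\in H^3(H,\C^\times)$. The canonical embedding sending a representation of $H$ to the $H$-graded vector space concentrated in degree $1$ with half-braiding given by the $H$-action exhibits a symmetric subcategory $\mathcal{L}\simeq\Rep(H)\subset\CC$. Because $\FPdim(\mathcal{L})=|H|$ and $\FPdim(\CC)=|H|^2$, M\"uger's centralizer formula $\FPdim(\mathcal{L})\FPdim(\mathcal{L}')=\FPdim(\CC)$ combined with $\mathcal{L}\subset\mathcal{L}'$ (which holds because $\mathcal{L}$ is symmetric) forces $\mathcal{L}=\mathcal{L}'$. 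Then $(\mathcal{L}')_{ad}=\mathcal{L}_{ad}\subset\mathcal{L}$ is automatic since the symmetric category $\mathcal{L}$ is closed under tensor products. Integrality of $\CC$ follows from the explicit description of the simples of $\Rep(D^\omega G)$ as pairs (conjugacy class $C$, projective irrep $\pi$ of the centralizer $C_G(g)$) with Frobenius--Perron dimensions $|C|\dim(\pi)\in\N$.

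For the ``if'' direction, suppose $\CC$ is integral modular with symmetric $\mathcal{L}\subset\CC$ satisfying $(\mathcal{L}')_{ad}\subset\mathcal{L}$. First I would argue that $\mathcal{L}$ may be taken Tannakian, $\mathcal{L}\simeq\Rep(G)$. By Deligne's theorem $\mathcal{L}$ is a priori super-Tannakian, $\mathcal{L}\simeq\Rep(G,z)$, but integrality of $\CC$ together with the adjoint condition allows one to replace $\mathcal{L}$ by its maximal Tannakian subcategory $\mathcal{L}_0$ without destroying the hypothesis (using $\mathcal{L}_0'\supseteq\mathcal{L}'$). Next I would de-equivariantize $\CC$ by $\Rep(G)$ to obtain a faithfully $G$-graded braided fusion category $\CC_G$ whose trivial component is $(\mathcal{L}')_G$. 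The hypothesis $(\mathcal{L}')_{ad}\subset\mathcal{L}$ de-equivariantizes to $((\mathcal{L}')_G)_{ad}=Vec$, so the trivial component of $\CC_G$ is pointed. A $G$-graded extension argument (DGNO, Section 4) upgrades this to: the whole $\CC_G$ is pointed, of the form $Vec_K^\eta$ for some finite group $K$ carrying a $G$-action by braided autoequivalences.

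Re-equivariantizing recovers $\CC\simeq(Vec_K^\eta)^G$, and the $G$-equivariantization of a pointed braided fusion category is braided equivalent to $Z(Vec_\Gamma^\mu)$ for some extension $\Gamma$ of $G$ by $K$ and some 3-cocycle $\mu\in H^3(\Gamma,\C^\times)$. Thus $\CC$ is group-theoretical. The main obstacles are (a) promoting $\mathcal{L}$ to a Tannakian subcategory in the integral modular setting, and (b) lifting ``trivial component pointed'' to ``entire $G$-graded de-equivariantization pointed'': both hinge on careful universal-grading and M\"uger-style dimension computations that characterize pointed extensions.
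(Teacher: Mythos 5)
The paper does not actually prove this proposition: it is imported verbatim from \cite{DGNO}, where the argument runs through Lagrangian (Tannakian, self-centralizing) subcategories of $Z(\CC)\simeq\CC\boxtimes\CC^{\mathrm{rev}}$ rather than through de-equivariantization of $\CC$ itself. So the only question is whether your sketch is sound, and as written it has concrete failures in both directions.

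In the ``only if'' direction, the identification of $\CC$ with $Z(Vec_H^\eta)$ is false. Group-theoreticity gives $Z(\CC)\simeq\Rep(D^\omega G)$ and modularity gives $Z(\CC)\simeq\CC\boxtimes\CC^{\mathrm{rev}}$, but these do not combine to make $\CC$ itself a Drinfeld center: a pointed modular category of prime dimension $p$ is group-theoretical (Proposition \ref{dgnoprop1}), yet $\FPdim(\CC)=p$ is not a perfect square, so $\CC$ cannot be $Z(Vec_H^\eta)$ for any $H$. For such a $\CC$ the correct witness is $\mathcal{L}=Vec$ (then $\mathcal{L}'=\CC$ and $\CC_{ad}=Vec\subset\mathcal{L}$), which your construction never produces. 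In the ``if'' direction there are two problems. First, the closing equivalence $(Vec_K^\eta)^G\simeq Z(Vec_\Gamma^\mu)$ is dimensionally impossible except in trivial cases: the left side has Frobenius--Perron dimension $|K||G|$, the right side $|\Gamma|^2=(|K||G|)^2$. The fact you actually need is Nikshych's theorem that equivariantizations of pointed braided categories are group-theoretical, which this paper itself invokes in the remark following Proposition \ref{DTY_+ is g-t}. Second, and more seriously, the promotion from ``the trivial component of $\CC_G$ is pointed'' to ``$\CC_G$ is pointed'' is exactly the crux and is not a formal consequence of faithful grading: a faithfully graded fusion category with pointed trivial component need only be nilpotent, as the Tambara--Yamagami categories $\TY(A,\chi,\tau)$ (graded by $\Z/2\Z$ with pointed trivial component, not pointed) show. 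Whatever additional input closes this gap --- modularity of $\CC$, integrality, or the braided $G$-crossed structure --- must be supplied explicitly, and the sketch does not do so.
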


Here a symmetric subcategory $\mathcal{L}$ is one for which $\tilde{s}_{X,Y}=\dim(X)\dim(Y)$ for all simple objects $X$ and $Y$ in $\mathcal{L}$.  In fact, all of the hypotheses of this proposition can be checked once we have determined the $\tilde{s}$-matrix, since one may compute the fusion rules from $\tilde{s}$ to determine $\mathcal{L}_{ad}$.

Group-theoretical categories also have the following useful characterization (see \cite{O2}): a fusion category $\CC$ is group-theoretical if, and only if, the category $\CC_\mathcal{M}^*$ dual to $\CC$ with respect to some indecomposable module category $\mathcal{M}$ is pointed (that is, if $\CC$ is Morita equivalent to a pointed fusion category).  More generally, a fusion category $\CC$ is defined in \cite{ENO2} to be \emph{weakly group-theoretical} if $\CC$ is Morita equivalent to a nilpotent fusion category $\mathcal{N}$.  It follows from \cite{GN} and \cite[Corollary 8.14]{ENO} that any weakly group-theoretical fusion category is weakly integral.  To our knowledge, there are no known examples of weakly integral fusion categories that are not weakly group-theoretical.  This provides further conceptual evidence for the validity of Conjecture \ref{mainconj}. Unfortunately it is not clear how to generalize the proof of Proposition \ref{erwprop} to the weakly group-theoretical setting.

\section{Quantum group type categories}\label{2:qgcats}
Associated to any semisimple finite dimensional Lie algebra $\g$ and a complex number $q$ such that $q^2$ is a primitive $\ell$th root
of unity is a ribbon fusion category $\CC(\g,q,\ell)$.  The construction is essentially due to Andersen (\cite{andersen}) and his collaborators.  We refer the
reader to the survey paper \cite{Survey} and the texts \cite{BK} and \cite{Tur} for a more complete treatment.

Here we will consider two special cases of this construction which yield weakly integral modular categories: $\g=\mathfrak{so}_N$ and with $\ell=2N$ for $N$ odd (type $B$) and $\ell=N$ for $N$ even (type $D$).  In these two cases we will denote $\CC(\so_N,q,\ell)$ by $\CC(B_r)$ and $\CC(D_r)$ for $N=2r+1$ and $N=2r$ respectively with the choice $q=e^{\pi\im/\ell}$.  We remark that in the physics literature these categories are often denoted $SO(N)_2$ corresponding to the tensor category of level $2$ (integrable highest weight) modules over the affine Kac-Moody algebra $\hat{\mathfrak{so}}_N$ equipped with the fusion tensor product (see \cite{fink}).
In both of these cases we find that the simple objects have dimensions in $\{1,2,\sqrt{\ell/2}\}$.  Moreover, the simple objects with dimensions $1$ and $2$ generate ribbon fusion subcategories which we will denote by $\CC(B_r)_0$ and $\CC(D_r)_0$.  Our results can be summarized as follows:

\begin{enumerate}
 \item When $\sqrt{\ell/2}\in\N$ $\CC(B_r)$ and $\CC(D_r)$ have property $\F$ (Theorems \ref{Bintegral} and \ref{Dintegral})
\item In any case $\CC(B_r)_0$ and $\CC(D_r)_0$ have property $\F$ (Theorem \ref{gradedBD}).
\end{enumerate}

\begin{remark}
\begin{enumerate}
 \item[(i)] That the weakly integral categories $\CC(B_1)$ and $\CC(B_2)$ have property $\F$ follows from \cite{jones86,jonescmp}.  The degenerate cases $\CC(D_2)$ and $\CC(D_3)$ can also be shown to have property $\F$ via the identifications $\so_4\cong\ssl_2\times\ssl_2$ (using \cite{jones86}) and $\so_6\cong\ssl_4$ (see \cite[page 192]{FLW}).  It can be shown that $\CC(B_3)$ and $\CC(D_5)$ also have property $\F$ but the computation would take us too far afield, so we leave this for a future paper.  While Conjecture \ref{mainconj} predicts that $\CC(B_r)$ and $\CC(D_r)$ have property $\F$ for any $r$, we do not yet have sufficiently complete information to work these out.
\item[(ii)] Property $\F$ does not depend on the particular choice of a root of unity $q$ since the matrices representing the braid group generators are defined over a Galois extension of $\Q$.
\end{enumerate}

\end{remark}

There are some well-known facts that we will use below, we recall them here along with some standard notational conventions for future reference.  Firstly, the twist coefficient corresponding to a simple object $X_\la$ in $\CC(\g,q,\ell)$ is given by
$$\theta_\la=q^{\lan\la+2\rho,\la\ra}$$ where $\lan\/,\/\ra$ is normalized so that $\lan\al,\la\ra=2$ for short roots and $\rho$ is half the sum of the positive roots.  We will denote by $N_{\la,\mu}^\nu$ the multiplicity of the simple object $X_\nu$ in the tensor product decomposition of $X_\la\ot X_\mu$, and $\tilde{s}$ will denote the normalization of the $S$-matrix with entries $\ts_{\la,\mu}$ with $\ts_{\zero,\zero}=1$.
We also have the following dimension formula:
$$\dim(X_\la)=\prod_{\al\in\Phi_+}\frac{[\lan\la+\rho,\al\ra]}{[\lan\rho,\al\ra]}$$
where $[n]:=\frac{q^n-q^{-n}}{q-q^{-1}}$.
When convenient we will denote by $\nu^*$ the label of $(X_\nu)^*$.  These quantities are related by the useful formula:

\begin{equation}\label{modeq}
 \theta_\la\theta_\mu\ts_{\la,\mu}=\sum_\nu N_{\la^*,\mu}^\nu\theta_\nu\dim(X_\nu)
\end{equation}

\subsection{Type B categories}
Now let us take $\g=\mathfrak{so}_{2r+1}$ and $\ell=4r+2$, with $q=e^{\pi i/\ell}$ for concreteness.  For this choice of $q$ the categories are all unitary (\cite{wenzlcstar}), so that $\dim(X)>0$ for each object $X$ and hence coincides with $\FPdim$.

We use the standard labeling convention for the fundamental weights of type $B$:
$\la_1=(1,0,\ldots,0),\ldots,\la_{r-1}=(1,\ldots,1,0)$ and $\la_r=\frac{1}{2}(1,\ldots,1)$.  Observe that the highest root is $\theta=(1,1,0,\ldots,0)$ and $\rho=\frac{1}{2}(2r-1,2r-3,\ldots,3,1)$.
From this we determine the labeling set for the simple objects in $\CC(B_r)$ and order them as follows:
 $$\{\mathbf{0},2\la_1,\la_1,\ldots,\la_{r-1},2\la_r,\la_r,\la_r+\la_1\}.$$
  For notational convenience we will denote by $\ve=\la_r$ and $\ve^\prime=\la_1+\la_r$.  In addition we adopt the following notation from \cite{Gan}: $\la_i=\ga^i$ for $1\leq i\leq r-1$ and $\ga^r=2\la_r$.   The dimensions of the simple objects are easily computed, we have:
$\dim(X_{\mathbf{0}})=\dim(X_{2\la_1})=1$, $\dim(X_{\ga^i})=\dim(X_{\la_i})=2$ for $1\leq i\leq r$, and
$\dim(X_{\ve})=\dim(X_{\ve^\prime})=\sqrt{2r+1}$.  Thus $\CC(B_r)$ has rank $r+4$ and dimension $4(2r+1)$ and is weakly integral.

Let us denote by $\tilde{s}(\la,\mu)$ the entry of $\tilde{s}$ corresponding to $X_{\la}$ and $X_{\mu}$.  From \cite{Gan} we compute the following:

\begin{eqnarray*}
&& \ts(2\la_1,2\la_1)=1,\quad \ts(2\la_1,\ga^i)=2, \quad \ts(2\la_1,\ve)=\ts(2\la_1,\ve^\prime)=-\sqrt{2r+1}\\
&&\ts(\ga^i,\ga^j)=4\cos(\frac{2ij\pi}{2r+1}),\quad \ts(\ga^i,\ve)=\ts(\ga^i,\ve^\prime)=0\\
&&\ts(\ve,\ve^\prime)=-\ts(\ve,\ve)=\pm\sqrt{2r+1}
\end{eqnarray*}

The remaining entries of $\tilde{s}$ can be determined by the fact that $\tilde{s}$ is symmetric.

One can determine the fusion rules for $\CC(B_r)$ by antisymmetrizing the multiplicities for $\mathfrak{so}_{2r+1}$ with respect to the ``dot action'' of the affine Weyl group, or by the Verlinde formula.  In any case we see that $X_{\ve}$ generates $\CC(B_r)$, with tensor product decomposition rules:
\begin{enumerate}
\item $X_{\ve}\ot X_{\ve}=X_{\mathbf{0}}\oplus\bigoplus_{i=1}^{r} X_{\ga^i}$
\item $X_{\ve}\ot X_{\ga^i}=X_{\ve}\oplus X_{\ve^\prime}$ for $1\leq i\leq r$
\item $X_{\ve}\ot X_{\ve^\prime}=X_{2\la_1}\oplus\bigoplus_{i=1}^{r}X_{\ga^i}$
\item $X_{\ve}\ot X_{2\la_1}=X_{\ve^\prime}$
\end{enumerate}

Moreover we see that $\CC(B_r)$ has a faithful $\Z_2$-grading (see Section \ref{pq^n} below for the definition).  The $0$-graded part $\CC(B_r)_0$ is generated (as an Abelian category) by the simple objects of dimensions $1$ and $2$ while the $1$-graded part $\CC(B_r)_1$ has simple objects $\{X_\ve, X_{\ve^\prime}\}$.

We note that the Bratteli diagram describing the inclusions of the simple components of $\End(X_{\ve}^{\ot n-1})\subset\End(X_{\ve}^{\ot n})$ is precisely the same as the one associated with the Fateev-Zamolodchikov model for $\Z_{2r+1}$ found in \cite{jonessurvey}.

\subsubsection{Type $B$ integral cases}
Observe that $\CC(B_r)$ is integral if, and only if, $2r+1$ is a perfect square.
Let $2r+1=t^2$ for some (odd) integer $t$.  Consider the category $\D(B_r)$
generated by $\one$, $V:=X_{2\la_1}$ and $W_i:=X_{\ga^{it}}$ where $1\leq i\leq (t-1)/2$.
\begin{lem}\label{blemma}
$\D(B_r)$
is symmetric,
and has simple objects $\one$, $V$ and $W_i$ ($1\leq i\leq (t-1)/2$).
\end{lem}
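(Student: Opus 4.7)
The plan is to verify two statements: first, that the set $\{\one, V, W_1, \ldots, W_{(t-1)/2}\}$ is closed under tensor product and duals, so that these are exactly the simple objects of $\D(B_r)$; and second, that $\D(B_r)$ is symmetric. By the theorem of M\"uger cited in the excerpt, the latter reduces to checking $\ts(X,Y)=\dim(X)\dim(Y)$ for every pair of simples $X,Y$ in $\D(B_r)$.

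The symmetry check is immediate from the $\ts$-data listed just before the lemma. Using $\dim(V)=1$ and $\dim(W_i)=2$, we have $\ts(V,V)=1$ and $\ts(V,W_i)=\ts(2\la_1,\ga^{it})=2$, together with the key computation
\[
\ts(W_i,W_j)=\ts(\ga^{it},\ga^{jt})=4\cos\!\left(\frac{2ij\, t^2\pi}{t^2}\right)=4\cos(2ij\pi)=4,
\]
which in each case equals the product of dimensions. This is exactly where the hypothesis $2r+1=t^2$ enters.

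For the closure claim I would first extract the fusion rules governing $X_{\ga^a}\ot X_{\ga^b}$ from the data already in the excerpt. Either by applying the Verlinde formula to the dihedral-looking block $\ts(\ga^i,\ga^j)=4\cos(2ij\pi/(2r+1))$ together with the entries for $\one$ and $V$, or by iteratively using the identity~\eqref{modeq} and the generator relations (1)--(4) involving $X_\ve$, one arrives at
\[
X_{\ga^a}\ot X_{\ga^b}=X_{\ga^{a+b}}\oplus X_{\ga^{|a-b|}},
\]
where indices are reduced modulo $t^2=2r+1$ via the reflection $\ga^k\leftrightarrow\ga^{t^2-k}$ for $r<k<t^2$, and $X_{\ga^0}$ is read as $\one\oplus V$. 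In particular $V\ot V=\one$ (an invertible object of dimension $1$ distinct from $\one$) and $V\ot X_{\ga^i}=X_{\ga^i}$, while each $X_{\ga^i}$ is self-dual since $\one$ appears in $X_{\ga^i}\ot X_{\ga^i}$.

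Applying these rules to $W_i\ot W_j$ with $1\le i\le j\le (t-1)/2$: the exponents $(i+j)t$ and $(j-i)t$ are both multiples of $t$. If $(i+j)t\le r$ the first summand is $W_{i+j}$; otherwise $(t+1)/2\le i+j\le t-1$ and the reflection gives $W_{t-(i+j)}$, again a generator because $1\le t-(i+j)\le(t-1)/2$. When $i=j$, the second summand specializes to $X_{\ga^0}=\one\oplus V$. Thus the listed set is closed under tensor product, and self-duality of each $W_i$ was already noted, so the simple objects of $\D(B_r)$ are precisely those listed. The main obstacle is the careful bookkeeping in the fusion-rule step, in particular handling the boundary reflection $\ga^k\leftrightarrow\ga^{t^2-k}$; once the rules are in hand, both closure and symmetry follow by direct inspection.
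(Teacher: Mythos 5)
Your proof is correct, but it reaches the closure statement by a genuinely different route than the paper. You first establish (or invoke) the full dihedral fusion rules $X_{\ga^a}\ot X_{\ga^b}\cong X_{\ga^{a+b}}\oplus X_{\ga^{|a-b|}}$ on the $0$-graded block and then check by index bookkeeping that multiples of $t$ are preserved; the paper instead avoids ever writing down these fusion rules. Its argument is: a priori $W_i\ot W_j$ can only contain $\one$, $V$ and various $X_{\ga^k}$ (by the grading and dimension count), and then equation~(\ref{modeq}) applied to the pair $(W_i,W_j)$ forces every constituent to have trivial twist, because $\theta_{\ga^{it}}=1$ and $\ts(\ga^{it},\ga^{jt})=4$ make the left side equal to the total dimension $4$ of the right side. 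Since $\theta_{\ga^k}=e^{-2k^2\pi\im/(2r+1)}=1$ exactly when $t\mid k$, the constituents are confined to $\{\one,V,W_k\}$. This twist-based argument is more economical (it needs only the $\ts$-entries and twist values already listed, not the Verlinde computation), whereas your route requires the full $\Rep(D_{2r+1})$-type fusion rules as an input -- a correct and standard fact, but one you assert rather than derive, and the one place where your write-up is thinner than it should be if this step is to carry the whole closure claim. Your symmetry verification via $\ts(\ga^{it},\ga^{jt})=4\cos(2ij\pi)=4$ is exactly the paper's. On balance both arguments are sound; yours yields more information (the explicit fusion ring of $\D(B_r)$, which is that of $\Z_t\rtimes\Z_2$), while the paper's is shorter and self-contained given the data displayed before the lemma.
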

\begin{proof}

We must first verify that the abelian category generated by $\{\one,V,W_i\}$ with $1\leq i\leq (t-1)/2$ is closed under the tensor product.  First observe that since $\FPdim(W_i)=2$ and each object in $\CC(B_r)$ is self-dual, we have $W_i^{\ot 2}=\one\oplus V\oplus X_{\ga^j}$ for some $j$.  We claim that $t\mid j$, so that $X_{\ga^j}=W_{j/t}$.  Indeed, from equation (\ref{modeq}) we have:

$$4=(\theta_{\ga^{it}})^2\ts_{\ga^{it},\ga^{it}}=1+\theta_{2\la_1}+2\theta_{\ga^j}.$$ We compute that $\theta_{2\la_1}=1$ which implies that $\theta_{\ga^j}=e^{-2j^2\pi\im/(2r+1)}=1$ hence $t=\sqrt{2r+1}$ divides $j$.  A similar argument shows that $W_i\ot W_j=W_k\oplus W_{k^\prime}$ for $i<j$, and the remaining fusion rules follow by Frobenius reciprocity.
The symmetry of $\D(B_r)$ is clear from the $\ts$-matrix (notice that $\ts(\ga^{it},\ga^{jt})=4\cos(\frac{2itjt\pi}{t^2})=4$).
\end{proof}

We can now prove:
\begin{thm}\label{Bintegral}
 $\CC(B_r)$ is group-theoretical for $2r+1=t^2$, and hence has property $\F$.
\end{thm}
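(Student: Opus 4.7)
The plan is to apply Proposition \ref{dgnoprop2}, which characterizes modular group-theoretical categories as those integral modular categories admitting a symmetric subcategory $\mathcal{L}$ satisfying $(\mathcal{L}^\prime)_{ad}\subset\mathcal{L}$; property $\F$ then follows from Proposition \ref{erwprop}. Integrality is immediate under the hypothesis: the only simple objects whose dimension is not already in $\N$ are $X_\ve$ and $X_{\ve^\prime}$, each of dimension $\sqrt{2r+1}=t$, so $\CC(B_r)$ is integral precisely when $2r+1=t^2$.

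The candidate for $\mathcal{L}$ is the subcategory $\D(B_r)$ produced in Lemma \ref{blemma}, which is already known to be symmetric. It remains to verify $(\D(B_r)^\prime)_{ad}\subset\D(B_r)$. The cleanest route is to show the stronger equality $\D(B_r)^\prime=\D(B_r)$. The inclusion $\D(B_r)\subset\D(B_r)^\prime$ is just the definition of symmetry. For the reverse inclusion I would invoke M\"uger's dimension formula for modular categories, $\dim(\D(B_r))\cdot\dim(\D(B_r)^\prime)=\dim(\CC(B_r))$. Direct computation gives $\dim(\D(B_r))=1+1+4\cdot(t-1)/2=2t$ and $\dim(\CC(B_r))=4(2r+1)=4t^2$, whence $\dim(\D(B_r)^\prime)=2t=\dim(\D(B_r))$, forcing equality. (As a sanity check one can instead read off from the $\ts$-matrix that $X_\ve,X_{\ve^\prime}$ fail to centralize $X_{2\la_1}$ since $\ts(2\la_1,\ve)=-\sqrt{2r+1}\neq\sqrt{2r+1}$, and that $X_{\ga^j}\in\D(B_r)^\prime$ forces $\cos(2tj\pi/t^2)=1$, i.e.\ $t\mid j$, so indeed $X_{\ga^j}=W_{j/t}\in\D(B_r)$.)

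Once $\D(B_r)^\prime=\D(B_r)$ is in hand, the adjoint condition is automatic: $(\D(B_r)^\prime)_{ad}=\D(B_r)_{ad}$ is contained in $\D(B_r)$ because any fusion subcategory is closed under tensor products (and $\CC(B_r)$ is self-dual, so $X\ot X^*\in\D(B_r)$ whenever $X\in\D(B_r)$). Proposition \ref{dgnoprop2} then yields that $\CC(B_r)$ is group-theoretical, and Proposition \ref{erwprop} supplies property $\F$. The only genuine step requiring care is the equality $\D(B_r)^\prime=\D(B_r)$; framing it as a dimension count bypasses any case analysis depending on the factorization of the odd integer $t$.
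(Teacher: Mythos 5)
Your proof is correct and reaches the same destination by the same overall strategy (apply Proposition \ref{dgnoprop2} to the symmetric subcategory $\D(B_r)$ of Lemma \ref{blemma}), but the key step is handled genuinely differently. The paper establishes $\D(B_r)^\prime\subset\D(B_r)$ by direct inspection of the $\ts$-matrix: $X_\ve$ and $X_{\ve^\prime}$ cannot centralize $W_1$ because the relevant $\ts$-entries vanish, and $X_{\ga^j}$ centralizes $W_1$ only when $4\cos(2j\pi/t)=4$, i.e.\ $t\mid j$. You instead invoke M\"uger's double-centralizer dimension formula $\dim(\D)\cdot\dim(\D^\prime)=\dim(\CC)$ for a full fusion subcategory of a modular category, compute $\dim(\D(B_r))=2t$ and $\dim(\CC(B_r))=4t^2$, and conclude $\dim(\D(B_r)^\prime)=2t$, which together with the inclusion $\D(B_r)\subset\D(B_r)^\prime$ (symmetry) and positivity of dimensions forces $\D(B_r)^\prime=\D(B_r)$. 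This is a legitimate appeal to \cite[Theorem 3.2(ii)]{Mu1} --- the same result the paper uses later in Propositions \ref{pq^2} and \ref{pq^3} --- and it buys you a cleaner, computation-free argument that also yields the stronger conclusion that $\D(B_r)$ is its own centralizer (a Lagrangian-type statement), rather than merely $\D(B_r)^\prime\subset\D(B_r)$. The paper's route is more elementary and self-contained, relying only on the explicitly listed $\ts$-entries; you in any case reproduce that check as a parenthetical, so nothing is lost. The remaining steps (integrality, $\D_{ad}\subset\D$ since $\D$ is a tensor subcategory closed under duals, and property $\F$ via Proposition \ref{erwprop}) match the paper.
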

\begin{proof}

We will verify the hypotheses of Proposition \ref{dgnoprop2}.
Clearly all simple objects have integral dimension and by Lemma \ref{blemma} $\D(B_r)$ is symmetric.  We claim that $(\D(B_r)^\prime)_{ad}\subset\D(B_r)$.  It is enough to show that $\D(B_r)^\prime\subset\D(B_r)$ since $\D(B_r)_{ad}\subset \D(B_r)$.  For this we will demonstrate that if $Z$ is a simple object in $\CC(B_r)$ satisfying $\ts_{Z,W_i}=\dim(Z)\dim(W_i)$ then $Z\in\D(B_r)$.  First notice that $X_\ve$ and $X_{\ve^\prime}$ cannot centralize $W_i$ since the corresponding $\ts$ entry is $0$.  If $X_{\ga^j}$ centralizes $W_1$ we have
$$\ts_{\ga^{t},\ga^j}=4\cos(\frac{2tj\pi}{t^2})=4\cos(\frac{2j\pi}{t})=\dim(W_1)\dim(X_{\ga^j})$$
which implies that $t\mid j$ and so $X_{\ga^j}\in\D(B_r)$.  Thus only objects in $\D(B_r)$ can centralize $W_1$ and so $\D(B_r)^\prime\subset \D(B_r)$ and the hypotheses of Proposition \ref{dgnoprop2} are satisfied.  Hence $\CC(B_r)$ is group-theoretical and hence has property $\mathbf{F}$.
\end{proof}

\subsection{Type D categories}
Now let us take $\g=\mathfrak{so}_{2r}$ and $\ell=2r$, with $q=e^{\pi i/\ell}$.  Observe that $\CC(D_r)$ is unitary so that the function $\dim$ coincides with $\FPdim$.

  The fundamental weights are denoted $\la_1=(1,0,\ldots,0),\ldots\la_{r-2}=(1,\ldots,1,0,0)$, for $1\leq i\leq r-2$
with $\la_{r-1}=\frac{1}{2}(1,\ldots,1,-1)$ and $\la_{r}=\frac{1}{2}(1,\ldots,1)$ the two fundamental spin representations.  We compute the labeling set for $\CC(D_r)$ and order them as follows:
$$
\{\mathbf{0},2\la_1,2\la_{r-1},2\la_r,\la_1,\cdots,\la_{r-2},\la_{r-1}+\la_r,\la_{r-1},\la_r,\la_1+\la_{r-1},\la_1+\la_r\}.$$
For notational convenience we will denote by $\ve_1=\la_{r-1}$, $\ve_2=\la_{r}$, $\ve_3=\la_1+\la_{r-1}$
and $\ve_4=\la_1+\la_r$ and set $\ga^j=\la_j$ for $1\leq j\leq r-2$ and $\ga^{r-1}=\la_{r-1}+\la_r$.  In this notation the dimensions of the simple objects are:
$\dim(X_{\ga^j})=2$ for $1\leq i\leq r-1$, $\dim(X_\mathbf{0})=\dim(X_{2\la_1})=\dim(X_{2\la_{r-1}})=\dim(X_{2\la_{r}})=1$ and $\dim(X_{\ve_i})=\sqrt{r}$ for $1\leq i\leq 4$.  The rank of $\CC(D_r)$ is $r+7$ and $\dim(\CC(D_r))=8r$ so that $\CC(D_r)$ is weakly integral.

The tensor product rules and $\ts$-matrix for $\CC(D_r)$ take different forms depending on
the parity of $r$.  The $\ts$-matrix entries can be recovered from \cite{Gan}, and we list those that are important to our calculations below.  We again denote by $\ts(\la,\mu)$ the $\ts$-entry corresponding to the pair $(X_\la,X_\mu)$:

\begin{eqnarray*}
&&\ts(2\la_1,2\la_1)=\ts(2\la_1,2\la_{r-1})=\ts(2\la_1,2\la_r)=1\\
&&\ts(2\la_1,\ga^j)=2,\quad \ts(2\la_1,\ve_i)=-\sqrt{r}\\
 &&\ts(2\la_{r-1},2\la_r)=\ts(2\la_{r},2\la_r)=(-1)^r\\ &&\ts(2\la_{r-1},\ga^j)=\ts(2\la_{r-1},\ga^j)=2(-1)^j\\
&&\ts(\ga^i,\ga^j)=4\cos(ij\pi/r),\quad \ts(\ga^j,\ve_i)=0
\end{eqnarray*}
In the case that $r=(2k+1)$, one finds that $X_{\ve_1}$ generates $\CC(D_r)$.  All simple objects are self-dual (i.e. $X\cong X^*$) except for $X_{\ve_i}$ $1\leq i\leq 4$, $X_{2\la_{r-1}}$ and $X_{2\la_{r}}$.

In the case that $r=2r$ is even all objects are self-dual and the subcategory generated by $X_{\ve_1}$ has $k+5$ simple objects
labelled by:
$$\{\zero,2\la_1,2\la_{r-1},2\la_{r},\ga^{2},\ga^4,\ldots,\ga^{r-2}, \ve_1,\ve_4\}.$$
The Bratteli diagram for the sequence of inclusions $\End(X_{\ve_1}^{\ot n})\subset\End(X_{\ve_1}^{\ot n})$ is the same as that of the Fateev-Zamolodchikov model for $\Z_{2k}$ found in
\cite{jonessurvey}.  We caution the reader that this subcategory \emph{is not} modular.
Similarly the (non-modular) subcategory generated by $X_{\ve_2}$ has $k+5$ simple objects, and together they generate the full category $\CC(D_r)$.

For any $r>4$ the category $\CC(D_r)$ has a faithful $\Z_2$-grading, where $\CC(D_r)_0$ is generated by the simple objects of dimension $1$ and $2$ and $\CC(D_r)_1$ has simple objects $X_{\ve_i}$, $1\leq i\leq 4$.

\subsubsection{Type $D$ integral cases}

Observe that if $r=2^{2t}$ then the dimension of each object in $\CC(D_r)$ is an integer since $\sqrt{2^{2t}}=2^t$.  Moreover, $8r$ is a power of $2$ so that Propositions \ref{erwprop} and \ref{dgnoprop1} immediately imply that $\CC(D_r)$ has property $\F$ in this special case.

More generally, we will show that when $r=x^2$ is a perfect square
the category $\CC(D_r)$ is group theoretical.  Denote $V:=X_{2\la_1}$, $U:=X_{2\la_{r-1}}$, $U^\prime=X_{2\la_r}$ and $Z_i:=X_{\ga^{2xi}}$ with $i\leq (x^2-2)/2x$ (note that for $r=4$ there are no $Z_i$).  For $r$ even,
define $\D_e(D_r)$ be the subcategory generated by $Z_i$, $V$, $U$ and $U^\prime$.  For $r$ odd define $D_o(\D_r)$ to be the subcategory generated by $W_i$ and $V$.
\begin{lem}
The subcategories $\D_e(D_r)$ and $\D_o(D_r)$ are symmetric and the sets $\{\one,V,Z_i\}$ (resp. $\{\one,V,U,U^\prime,Z_i\}$) are all simple objects in $\D_o(D_r)$ (resp. $\D_e(D_r)$).
\end{lem}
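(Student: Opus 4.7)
The plan is to verify, with a case split on the parity of $r$ (equivalently, on the parity of $x$ where $r = x^2$), two things: (a) closure of the listed sets under $\ot$, and (b) the symmetry condition $\ts(X,Y) = \dim(X)\dim(Y)$ for all pairs of simples in the set. The argument parallels that of Lemma \ref{blemma}.

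For (b), a direct reading of the $\ts$-entries above suffices. The key computations are $\ts(\ga^{2xi}, \ga^{2xj}) = 4\cos(4ij\pi) = 4$ and $\ts(V, Z_i) = 2$, and, in the even case, $\ts(U, Z_i) = 2(-1)^{2xi} = 2$, $\ts(V, U) = \ts(V, U') = 1$, and $\ts(U, U) = \ts(U, U') = \ts(U', U') = (-1)^r = 1$. The last line makes clear why $U$ and $U'$ must be omitted in the odd case.

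For (a), the essential ingredient is the twist computation $\theta_{\la_k} = q^{k(2r-k)} = (-1)^k e^{-\pi\im k^2/(2r)}$. With $r = x^2$ one finds $\theta_{\ga^{2xm}} = 1$, and by inspecting the congruence $k^2 \equiv 2rk \pmod{4r}$ one obtains the converse $\theta_{\ga^k} = 1 \Rightarrow 2x \mid k$. One also checks that $\theta_V = 1$ for every $r$, and $\theta_U = \theta_{U'} = e^{\pi\im r/2} = 1$ whenever $r = x^2$ is even (since then $x$ is even, so $r/2$ is even). The ribbon identity $c_{G,Z_i}c_{Z_i,G} = \ts(G,Z_i)/(\dim G \dim Z_i)\cdot Id = Id$ on the simple object $G \ot Z_i$ (any invertible $G$ in the list) then gives $\theta_{G \ot Z_i} = \theta_G \theta_{Z_i} = 1$, which by the converse above forces $G \ot Z_i = Z_m$ for some $m$. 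The invertibles among themselves form a group of order $\le 4$ and close up by an analogous twist argument. Finally, for $Z_i \ot Z_j$ the faithful $\Z_2$-grading of $\CC(D_r)$ rules out any dim-$\sqrt r$ summand, so every simple summand lies in the dim-$\{1,2\}$ class; equation (\ref{modeq}) then reads
\[
4 = \theta_{Z_i}\theta_{Z_j}\ts(Z_i, Z_j) = \sum_\nu N_{Z_i, Z_j}^\nu \theta_\nu \dim(X_\nu),
\]
and matching against the known twists of the dim-$1$ simples together with positivity of multiplicities forces every dim-$2$ summand $X_{\ga^k}$ to satisfy $\theta_{\ga^k} = 1$, hence $k = 2xm$ and $X_{\ga^k} \in \D$.

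The main obstacle is the arithmetic step $\theta_{\ga^k} = 1 \Rightarrow 2x \mid k$: the analogous step in Lemma \ref{blemma} was easy because $2r+1 = t^2$ was odd, but here the $2$-adic valuation of $4r = 4x^2$ requires some care, and one must treat $x$ even and $x$ odd separately in order to rule out spurious solutions like $k = x$ or $k = 2x - 1$.
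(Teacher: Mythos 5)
Your argument is correct and follows essentially the same route as the paper's proof: symmetry is read off the $\ts$-matrix entries, and closure under $\ot$ is forced by equation (\ref{modeq}) together with the computation that $\theta_{\ga^k}=1$ if and only if $2x\mid k$ (plus the twists of the invertibles). You simply supply more of the arithmetic (the congruence $k^2\equiv 2rk \pmod{4r}$) and make explicit the use of the $\Z_2$-grading to exclude dimension-$\sqrt{r}$ summands, both of which the paper leaves implicit.
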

\begin{proof}
  As in the type $B$ case we verify that the sets given represent all simple objects by exploiting the equation (\ref{modeq}).  For example to see that $Z_i\ot Z_j$ contains only the simple objects listed above, we compute that $\theta_{\ga^j}=q^{j(2x^2-j)}=1$ if, and only if, $2x\mid j$ for $q=e^{\pi \im/2x^2}$, and $\theta_{2\la_r}=\theta_{2\la_{r-1}}=(\im)^r$.  Thus the fact that $\ts(Z_i,Z_j)=4$ implies that any simple subobject $X$ of $Z_i\ot Z_j$ must have $\theta_X=1$ which is sufficient to conclude that such an $X$ is as we have listed.
It is immediate from the $\ts$-matrix entries listed above that the given categories are symmetric since the condition $\ts_{i,j}=\dim(X_i)\dim(X_j)$ is satisfied by all pairs of objects.
\end{proof}

We can now prove:
\begin{thm}\label{Dintegral}
 $\CC(D_r)$ is group-theoretical for $r=x^2$, and hence has property $\F$.
\end{thm}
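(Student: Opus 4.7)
The plan is to verify the hypotheses of Proposition \ref{dgnoprop2}, closely imitating the proof of Theorem \ref{Bintegral}. Integrality of $\CC(D_r)$ is immediate from $r = x^2$, for then $\sqrt{r} = x \in \N$ and every simple-object dimension lies in $\{1, 2, x\} \subset \N$. The preceding lemma supplies a symmetric subcategory $\mathcal{L}$ --- namely $\D_e(D_r)$ if $r$ is even and $\D_o(D_r)$ if $r$ is odd --- with an explicit list of simple objects. It remains to verify $(\mathcal{L}^\prime)_{ad} \subset \mathcal{L}$. The boundary case $x = 2$ (so $r = 4$) I would dispose of separately: no $Z_i$ exists, and $\dim(\CC(D_4)) = 32 = 2^5$, so Propositions \ref{erwprop} and \ref{dgnoprop1}(1) apply directly. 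Assume $x \geq 3$ below.

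First I would identify the simple objects of $\mathcal{L}^\prime$ using the $\ts$-matrix. The entry $\ts(\ga^j, \ve_i) = 0 \neq 2x = \dim(X_{\ve_i})\dim(Z_1)$ immediately excludes each $X_{\ve_i}$ from $\mathcal{L}^\prime$. For the dimension-two candidates, $\ts(\ga^{2x}, \ga^j) = 4\cos(2\pi j/x) = 4$ forces $x \mid j$, so writing $j = xk$ the nontrivial dimension-two objects in $\mathcal{L}^\prime$ are found among $\{X_{\ga^{xk}} : 1 \leq k \leq (r-1)/x\}$, with the rest of $\mathcal{L}^\prime$ pointed.

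Next I would compute $(\mathcal{L}^\prime)_{ad}$, generated by $Y \ot Y^*$ for simple $Y \in \mathcal{L}^\prime$. Invertibles contribute only the unit. For $Y = X_{\ga^{xk}}$, the tensor square has total Frobenius--Perron dimension $4$, so its simple summands are of dimension $1$ or $2$ only, excluding the $X_{\ve_i}$. Substituting into equation (\ref{modeq}) with the known formula for $\theta_{\ga^j}$ and $\ts(\ga^{xk}, \ga^{xk}) = 4(-1)^{k^2}$, I would show that any $X_{\ga^m}$ appearing must satisfy $\theta_{\ga^m} = 1$, equivalent to $2x \mid m$, which places $X_{\ga^m}$ among the generators $Z_i$ of $\mathcal{L}$. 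Combined with the trivial invertible contribution this gives $(\mathcal{L}^\prime)_{ad} \subset \mathcal{L}$, and the theorem follows from Propositions \ref{dgnoprop2} and \ref{erwprop}.

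The main technical obstacle is this last twist computation: the sign $(-1)^{k^2}$ and the twists of the non-unit invertibles $V, U, U^\prime$ depend on the parity of $r$ and of $k$, so I would split the analysis into the cases $r$ odd, $r \equiv 2 \pmod 4$, and $r \equiv 0 \pmod 4$, in each case pinning down from (\ref{modeq}) which invertibles occur as summands before passing to the dimension-two summands.
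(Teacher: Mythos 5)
Your proposal is correct and follows the same overall strategy as the paper's proof: take the symmetric subcategories $\D_e(D_r)$ (for $r$ even) and $\D_o(D_r)$ (for $r$ odd) supplied by the preceding lemma, verify $(\mathcal{L}')_{ad}\subset\mathcal{L}$, invoke Proposition \ref{dgnoprop2}, and dispose of $r=4$ separately via Proposition \ref{dgnoprop1}. The one substantive difference is in how the centralizer is handled, and there your version is the more careful one. For $r$ even the paper asserts $\D_e(D_r)'=\D_e(D_r)$ outright, on the grounds that $Z_1=X_{\ga^{2x}}$ is not centralized by any $X_{\ga^j}$ with $2x\nmid j$; but $\ts(\ga^{2x},\ga^{x})=4\cos(2x\cdot x\pi/x^2)=4$, so $X_{\ga^{x}}$ (and more generally $X_{\ga^{xk}}$ with $k$ odd) does centralize $Z_1$ and in fact every generator of $\D_e(D_r)$ while lying outside $\D_e(D_r)$. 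The centralizer condition only forces $x\mid j$, exactly as your proposal concludes. Your extra step --- using equation (\ref{modeq}) to show that every simple summand of $X_{\ga^{xk}}^{\ot 2}$ has trivial twist --- is precisely what is needed to recover $(\mathcal{L}')_{ad}\subset\mathcal{L}$, and it goes through more cleanly than you anticipate: $\theta_{\ga^{xk}}^2\,\ts(\ga^{xk},\ga^{xk})=(-1)^{k}\cdot 4(-1)^{k}=4$ for every $k$ and both parities of $r$, so the right-hand side of (\ref{modeq}) forces all twists to be $1$ at once; the twist-one two-dimensional summands are the $Z_i$, and the possible invertible summands cause no trouble (for $r$ even all four invertibles already lie in $\D_e(D_r)$, and for $r$ odd the invertibles of trivial twist are just $\one$ and $V$, since $\theta_{2\la_{r-1}}=\theta_{2\la_r}=\im^{x^2}\ne 1$). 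In short, you prove the theorem the same way the paper does, but your treatment of the adjoint of the centralizer supplies a step that the paper's proof elides behind a slightly-too-strong claim about $\D_e(D_r)'$.
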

\begin{proof}
We need only verify that $(\D_o(D_r)^\prime)_{ad}\subset \D_o(D_r)$ and $(\D_e(D_r)^\prime)_{ad}\subset \D_e(D_r)$.  In the case $r=x^2$ is even it is clear from the $\ts$-matrix entries listed above that $\D_e(D_r)^\prime=\D_e(D_r)$ since no $X_{\ve_i}$ centralizes $V$ and $Z_1$ is not centralized by any $X_{\ga^j}$ with $2x\nmid j$.  Since $\D_e(D_r)$ is a tensor-subcategory the result follows from Proposition \ref{dgnoprop2} (for $r\geq 6$, the case $r=4$ follows from Proposition \ref{dgnoprop1}).

For $r$ odd we see that $U$ and $U^\prime=U^*$ are in $\D_o(D_r)^\prime$ but not in $\D_o(D_r)$.  However, $U\ot U^*=U\ot U^\prime\cong\one$ so that we still have $(\D_o(D_r)^\prime)_{ad}\subset \D_o(D_r)$, and the claim follows by Proposition \ref{dgnoprop2}.
\end{proof}

\section{Some Classification Results}\label{3:class}

In this section we classify fusion categories whose simple objects have dimensions $1$ or $2$ that are generated by a self-dual object of dimension $2$, as well as integral modular categories of dimension $pq^2$ or $pq^3$.  In all cases we conclude that the categories must be group-theoretical.  These results will be useful later to verify Conjecture \ref{mainconj} in several cases.
\subsection{Dimension $2$ generators}

The following definition was introduced in \cite{RJPAA}:
\begin{defn}
Two fusion categories $\CC$ and $\D$ are \emph{Grothendieck equivalent} if they share the same fusion rules, i.e. $Gr(\CC)$ and $Gr(\D)$ are isomorphic as unital based rings.
\end{defn}

\begin{thm}\label{main}
 Suppose that $\CC$ is a fusion category such that:
\begin{enumerate}
\item $\FPdim(X)\in\{1,2\}$ for any simple object $X$.
\item All objects are \emph{self-dual}, i.e. $X\cong X^*$ (non-canonically isomorphic) for every object $X$.
    \item $\CC=\CC[X_1]$ with $X_1$ simple and $\FPdim(X_1)=2$ (\emph{i.e.} every simple object $Y$ is a subobject of $X_1^{\ot n}$ for some $n$).
        \item $Gr(\CC)$ is commutative.
\end{enumerate}
Then we have:
\begin{enumerate}
 \item[(i)] $\CC$ is Grothendieck equivalent to $\Rep(D_n)$, the representation category of the dihedral group of order $2n$.
\item[(ii)] $\CC$ is group-theoretical.
\end{enumerate}
\end{thm}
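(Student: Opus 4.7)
The plan is to determine the fusion ring by first analysing the invertible simples and the decomposition of $X_1\otimes X_1$, then building up the remaining $\FPdim 2$ simples by a Tchebyshev-type recursion, and finally exploiting the resulting $\Z/2$-graded structure to obtain group-theoreticity. Write $A\subset\Irr(\CC)$ for the set of invertibles; hypothesis (2) gives $g\otimes g\cong g\otimes g^*\cong\one$ for every $g\in A$, so $A$ is an elementary abelian $2$-group. Frobenius reciprocity and $X_1\cong X_1^*$ yield $\dim\Hom(X_1\otimes X_1,\one)=\dim\End(X_1)=1$, hence $X_1\otimes X_1=\one\oplus U$ with $\FPdim(U)=3$. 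Defining the stabilizer $H:=\{g\in A:g\otimes X_1\cong X_1\}$, Frobenius reciprocity identifies $H$ with the set of invertibles occurring in $X_1\otimes X_1$, so a dimension count leaves two cases: (A) $|H|=4$ and $X_1\otimes X_1=\bigoplus_{h\in H}h$, or (B) $|H|=2$, $H=\{\one,g_0\}$, and $X_1\otimes X_1=\one\oplus g_0\oplus Y_2$ for a single simple $Y_2$ with $\FPdim(Y_2)=2$.

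Next, in Case (B) I set $Y_0=\one$, $Y_1=X_1$ and recursively let $Y_{k+1}$ be the new simple summand of $X_1\otimes Y_k$. Using commutativity of $Gr(\CC)$ (hypothesis (4)), self-duality and Frobenius reciprocity, induction shows $X_1\otimes Y_k\cong Y_{k-1}\oplus Y_{k+1}$ and that the $Y_k$ are pairwise non-isomorphic $\FPdim 2$ simples, until at some index $m$ the recursion collapses — either $X_1\otimes Y_m=Y_{m-1}\oplus Y_m$ (giving $n=2m+1$ odd) or $X_1\otimes Y_m=Y_{m-1}\oplus\bigoplus_{h\in g_1 H}h$ for a coset $g_1H$ (giving $n$ even). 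Case (A), which captures $D_4$-like situations, is analogous after replacing $X_1$ by its $A/H$-twists. Once the simples are listed, hypothesis (3) shows $\CC=\CC[X_1]$ has no further simples, and the remaining products $Y_j\otimes Y_k$ and $g\otimes Y_k$ are forced by associativity and commutativity of $Gr(\CC)$; they match the standard rules $\chi_j\otimes\chi_k=\chi_{j+k}\oplus\chi_{j-k}$ of the $2$-dimensional irreducibles of $D_n$, which establishes (i).

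For (ii), part (i) endows $\CC$ with a faithful $\Z/2$-grading $\CC=\CC_0\oplus\CC_1$ whose trivial component $\CC_0$ is the pointed subcategory generated by $A$, so $\CC_0\cong\Vec_A^\omega$ for some $3$-cocycle $\omega\in Z^3(A,\C^\times)$. By the general theory of graded extensions of fusion categories, $\CC$ is classified by a homomorphism $\Z/2\to\mathrm{BrPic}(\Vec_A^\omega)$ and some cohomological data; the dihedral fusion pattern from Step 2 forces the invertible $\Vec_A^\omega$-bimodule category classifying $\CC_1$ to be pointed, and the category $\CC$ is then Morita equivalent to the pointed category $\Vec_G^{\omega'}$ for $G$ a group of order $2|A|$ (a semidirect or central extension of $A$ by $\Z/2$). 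Hence $\CC$ is group-theoretical.

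The delicate parts of the argument are the termination analysis in Step 2 and the bimodule identification in Step 3. In Step 2 one must carefully distinguish the two collapse patterns of the recursion, each corresponding to a different parity of $n$, and verify that commutativity of $Gr(\CC)$ rules out branching. In Step 3 the main point is that the only invertible $\Vec_A^\omega$-bimodule categories of rank $|A|$ are pointed, which is what pastes the graded extension data into a twisted group structure on $G=A\rtimes\Z/2$ and thereby excludes exotic categorifications of the dihedral fusion ring.
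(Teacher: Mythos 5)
Your part (i) is essentially the paper's argument: the same dichotomy on $X_1^{\ot 2}$ (four invertibles versus $\one\oplus Z_2\oplus X_2$), the same Tchebyshev-type chain $X_1\ot X_k\cong X_{k-1}\oplus X_{k+1}$ with the same two termination patterns distinguishing $\Rep(D_{2k+1})$ from $\Rep(D_{2k+2})$, and the same bookkeeping to force the remaining fusion rules. That part is sound.

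Part (ii) has a genuine gap. You assert that (i) ``endows $\CC$ with a faithful $\Z/2$-grading whose trivial component is the pointed subcategory generated by $A$.'' No such grading exists once $n>4$: in a faithful grading all components have equal Frobenius--Perron dimension, so the trivial component of a faithful $\Z/2$-grading of $\CC$ would have dimension $n$, whereas the pointed part has dimension $2$ ($n$ odd) or $4$ ($n$ even). Worse, for $n$ odd the universal grading group of the dihedral fusion ring is trivial (equivalently $\CC_{ad}=\CC$: already $X_1\ot X_1^*$ contains $X_2$, and iterating generates everything), so $\CC$ admits no nontrivial faithful grading at all. Hence the appeal to classifying $\CC$ as a $\Z/2$-extension of $\mathrm{Vec}_A^\omega$ via the Brauer--Picard group never gets off the ground; you may be conflating $\CC$ with its conjectural Morita dual $\mathrm{Vec}_{D_n}^{\omega'}$, which \emph{is} such an extension of $\mathrm{Vec}_{\Z_n}$ --- but identifying the dual as pointed is precisely what has to be proved. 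The paper instead exhibits the explicit algebra $A=\one\oplus Z_2$ in $\CC$ and shows by internal-$\Hom$ computations (following Etingof--Gelaki--Ostrik) that the dual category $A\text{-bimod}$ is pointed; the delicate step, for which your outline has no counterpart, is the ``middle'' object $X_{(k+1)/2}$, where a potential non-invertible simple $A$-bimodule must be excluded by a dimension-count contradiction. To repair your part (ii) you would need to carry out that bimodule analysis (or produce some other module category with pointed dual); the graded-extension formalism cannot substitute for it here.
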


The following is immediate:
\begin{cor}\label{maincor}
 Suppose that $\CC$ is a braided fusion category satisfying conditions (1) and (2) of Theorem \ref{main}.  Then $\CC$ has property $\mathbf{F}$.
\end{cor}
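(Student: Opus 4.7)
The plan is to reduce the problem via Lemma \ref{genlem} to verifying property $\F$ for the braided fusion subcategory $\CC[X]$ generated by a single simple object $X$, and then to invoke Theorem \ref{main} together with Proposition \ref{erwprop}. The key observation making this work is that hypothesis (4) of Theorem \ref{main}---commutativity of the Grothendieck ring---is automatic as soon as $\CC$ is braided, since the braiding gives an isomorphism $X\ot Y\cong Y\ot X$ for all $X,Y$, and this descends to any fusion subcategory.

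First I would take $\mathcal{S}$ to be the set of all simple objects of $\CC$ and apply Lemma \ref{genlem}, so that it suffices to show $\CC[X]$ has property $\F$ for each simple $X$. I would then split into two cases according to $\FPdim(X)\in\{1,2\}$. When $\FPdim(X)=1$ the object $X$ is invertible, so $\CC[X]$ is pointed; pointed fusion categories are group-theoretical, and Proposition \ref{erwprop} gives property $\F$ immediately.

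The main case is $\FPdim(X)=2$. Here I would verify that $\CC[X]$ satisfies all four hypotheses of Theorem \ref{main}. Condition (3) is built into the definition of $\CC[X]$. Conditions (1) and (2) pass to $\CC[X]$ because every simple object of $\CC[X]$ appears as a subobject of some $X^{\ot n}$ and hence is also a simple object of $\CC$, so it inherits self-duality and the restriction $\FPdim\in\{1,2\}$ from the ambient category. Condition (4) is automatic from the braiding on $\CC[X]$ by the observation above. Theorem \ref{main}(ii) then tells us that $\CC[X]$ is group-theoretical, and a second application of Proposition \ref{erwprop} delivers property $\F$.

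There is no real obstacle in this argument; the substantive content has already been packaged into Theorem \ref{main} and Proposition \ref{erwprop}. The only point requiring any care is the observation that the hypotheses (1) and (2) of Theorem \ref{main} genuinely restrict to the full subcategory $\CC[X]$, which is immediate from semisimplicity together with the fact that simple subobjects of a simple object of $\CC$ are simple in $\CC$.
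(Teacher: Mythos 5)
Your proposal is correct and follows essentially the same route as the paper: reduce via Lemma \ref{genlem} to singly generated subcategories $\CC[X]$, observe that the non-pointed ones satisfy all four hypotheses of Theorem \ref{main} (commutativity of the Grothendieck ring being automatic from the braiding), and conclude with Proposition \ref{erwprop}. The paper states this in one sentence; you have merely spelled out the pointed/non-pointed case split and the inheritance of hypotheses (1) and (2), both of which are implicit in the paper's argument.
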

\begin{proof}
Every non-pointed simply generated subcategory of $\CC$ satisfies all four conditions of Theorem \ref{main}, so the claim follows from Proposition \ref{erwprop} and Lemma \ref{genlem}.
\end{proof}

\begin{proof} (of Theorem \ref{main}).
 Let $X_1$ be a simple object generating $\CC$.

First suppose that $X_1^{\ot 2}\cong \one \oplus Z_2\oplus Z_3\oplus Z_4$ where $\FPdim(Z_i)=1$.  Then $X_1^{\ot 3}\cong X_1^{\oplus 4}$ since each $Z_i$ is self-dual.  Moreover the $Z_i$ are distinct since $\dim\Hom(X_1\ot X_1,Z_i)=\dim\Hom(X_1\ot Z_i,X_1)=1$ by comparing FP-dimensions.  This implies that $\CC$ is Grothendieck equivalent to $\Rep(D_4)$ and $\FPdim(\CC)=8$ so that $\CC$ is group-theoretical by Proposition \ref{dgnoprop1} above.

Now suppose that $X_1^{\ot 2}\cong \one \oplus Z_2\oplus X_2$ where $\FPdim(X_2)=2$ and $\FPdim(Z_2)=1$.  This implies that $Z_2\ot X_1\cong X_1$, but we must analyze cases for $X_1\ot X_2$.  If $X_1\cong X_2$ we find that $\CC$ is Grothendieck equivalent to $\Rep(D_3)$ by inspection.  If $X_1\not\cong X_2$ then we have three possibilities:

$X_1\ot X_2\cong X_1\oplus \begin{cases}
                   X_3 & \FPdim(X_3)=2, X_3\not\cong X_2\\
Z_3\oplus Z_4 & \FPdim(Z_i)=1\\
X_2 &\\
                 \end{cases}$

In the latter two cases all simple objects appear in $X_1^{\ot 3}$ and all  fusion rules are completely determined: we obtain Grothendieck equivalences with $\Rep(D_6)$ and $\Rep(D_5)$ respectively.   In the first case we proceed inductively.  Assuming that $X_1\ot X_{k-1}\cong X_{k-2}\oplus X_{k}$ where $j$ is minimal such that $X_j$ appears in $X_1^{\ot j}$ and $\FPdim(X_i)=2$ we find that there are three distinct possibilities for $X_1\ot X_k$:
\begin{enumerate}
\item[(a)] $X_{k-1}\oplus X_{k+1}$,
\item[(b)] $X_{k-1}\oplus Z_3\oplus Z_4$ with $\FPdim(Z_i)=1$, or
\item[(c)] $X_{k-1}\oplus X_{k}$.
\end{enumerate}
The finite rank of $\CC$ implies that case (a) cannot be true for all $k$, so that there is some minimal $k$ for which case (b) or (c) holds.  In cases (b) and (c) all fusion rules involving $X_1$ are completely determined, i.e. every simple object appears in $X_1^{\ot n}$ for some $n\leq k+1$.  Moreover, it can be shown that in fact \emph{all} fusion rules are determined in these cases.  We sketch the argument in case (b), case (c) is similar.

Let $k$ be minimal such that $X_1\ot X_{k}\cong X_{k-1}\oplus Z_3\oplus Z_4$ with $\FPdim(Z_i)=1$.  The simple object of $\CC$ are then $\{\one,Z_2,Z_3,Z_4,X_1,\ldots,X_k\}$ where $\FPdim(X_i)=2$ and $\FPdim(Z_i)=1$.
The fusion rules involving $X_1$ are:
 $$X_1\ot X_i\cong X_{i-1}\oplus X_{i+1} \text{ for } i\leq k-1,$$
 $$X_1\ot X_k\cong X_{k-1}\oplus Z_3\oplus Z_4, \; X_1\ot Z_2\cong X_1, \text{ and }
 X_1\ot Z_3\cong X_1\ot Z_4\cong X_{k}.$$
Thus the fusion matrix $N_{X_1}$ is known.  Next we determine the fusion rules involving $Z_3$, (the rules for $Z_4$ essentially the same).  Firstly, $\FPdim(Z_3\ot Z_2)=1$ so $Z_3\ot Z_2\cong Z_4$.  Next we see that $Z_3\ot X_i\cong X_{k-i+1}$.  For $i=1,k$ this is clear, and the rest follows by induction.  From this it follows that $Z_2\ot X_i\cong X_i$ since $Z_2\cong Z_3\ot Z_4$.  Now we use the fact that $X\rightarrow N_X$ is a representation of the Grothendieck semiring of $\CC$ to determine the $N_{X_i}$ for $i>1$ inductively from the fusion rules: $ X_i\cong X_1\ot X_{i-1}\ominus X_{i-2}$ (formally).

Observe that in case (b) $\FPdim(\CC)=4k+4$ and in case (c) $\FPdim(\CC)=4k+2$.  By inspection, we have proved  $\CC$ is Grothendieck equivalent to $\Rep(D_{2k+2})$ or $\Rep(D_{2k+1})$ in cases (b) and (c) respectively. Thus (i) is proved.

Now we proceed to the proof of (ii).  To prove that $\CC$ is group-theoretical we will exhibit an indecomposable module category $\mathcal{M}$ over $\CC$ so that $\CC_{\mathcal{M}}^*$ is a pointed category.  To do this we will produce an algebra $A$ in $\CC$ so that the category $A-\mathrm{bimod}=\CC_{\Rep(A)}^*$ of $A$-bimodules in $\CC$ is pointed
($\Rep(A)$ denotes the category of right $A$-modules in $\CC$).  We follow the method of proof of \cite[Theorem 6.3]{EGO}. We will focus on case (b), as the proof of case (c) is precisely the same.
In case (b) (and (c)) we take $A=\one\oplus Z_2$ as an object of $\CC$.  As in \cite[Page 3050]{EGO}, $Z_2\ot X_1\cong X_1$ implies that $A$ has a unique structure of a semisimple algebra in $\CC$, which is clearly indecomposable (see \cite[Definition 3.2]{O1}).  Thus $\CC_{\Rep(A)}^*$ is a fusion category (see \cite[Theorem 2.15]{ENO}), with unit object $A$.

Notice that $X_i\ot Z_2\cong X_i$ so that $X_i\ot A\cong 2X_i$ as objects of $\CC$.  Thus $X_i$ has two simple (right) $A$-module structures.  Moreover, for any simple $A$-module $M$ with $\Hom(M,X_i)\not=0$ we have $\Hom_A(X_i\ot A,M)\not=0$, so any such $A$-module $M$ is isomorphic to $X_i$.  Fix such an $M$.  From \cite[Example 3.19]{EO} and \cite[Lemma 6.1]{EGO} we see that the internal-$\Hom$ $\uHom(M,M)$
\begin{enumerate}
\item is a subobject of $X_i\ot X_i$,
\item is an algebra and
\item has $\FPdim(\uHom(M,M))=2$.
\end{enumerate}
Since $X_i^{\ot 2}\cong \one\oplus Z_2\oplus X_j$ for $i\not=\frac{k+1}{2}$ (always true if $k$ is even), we find that in these cases $\uHom(M,M)=A$.  Thus, if $i\not=\frac{k+1}{2}$, $Z_2\ot M=M$ (as $A$-modules), and the proof of \cite[Lemma 6.2]{EGO} goes through, showing that each $X_i$, $i\not=\frac{k+1}{2}$, has $4$ $A$-bimodule structures $M_i^{(j)}$, $1\leq j\leq 4$ and each $M_i^{(j)}$ is invertible in $A-\mathrm{bimod}$.  Now consider $X^\prime:=X_{\frac{k+1}{2}}$ ($k$ even).  Let $N_1$ and $N_2$ be the two simple $A$-modules with $N_i=X^\prime$ as objects.  There are two possibilities: $Z_2\ot N_1=N_1$ or $Z_2\ot N_1=N_2$.  In the first case we obtain $4$ invertible $A$-bimodules just as in the other cases.  In the second, we may assume that $\uHom(N_i,N_i)=\one\oplus Z_3$, as $X^\prime\ot X^\prime=\one\oplus Z_2\oplus Z_3\oplus Z_4$.  In this case $L:=N_1\oplus N_2$ has the structure of a simple $A$-bimodule.  Moreover, since $\FPdim(L)$ is integral and $\FPdim(\CC)=4k+4=\FPdim(A-\mathrm{bimod})$ we conclude that $L$ is the unique simple $A$-bimodule with $\FPdim(L)=2$.  But this implies that $M_i^{(j)}\ot L\cong L$ for every $i,j$ since $M_i^{(j)}$ is invertible, a contradiction.  By dimension considerations there are $4$ more simple invertible objects in $A-\mathrm{bimod}$ isomorphic to $\one\oplus Z_2$ or $Z_3\oplus Z_4$, as objects of $\CC$.  Indeed we can identify them: $A$=unit object, $A^\prime$, the kernel of the multiplication map (as an $A$-bimodule morphism) $A\ot A\rightarrow A$.  Fix any $A$-module $T$ with $T=Z_3\oplus Z_4$, as objects of $\CC$, then $\uHom(T,T)=A$ so $T$ has an $A$-bimodule structure $T_1$ and $T_1\ot A^\prime\not=T_1$ is the final invertible object.  Hence $A-\mathrm{bimod}$ is pointed, and (ii) is proved.

\end{proof}

We would like to point out that Theorem \ref{main}(i) is related to some results in other contexts.  In \cite[Corollary 4.6.7(a)]{GHJ} a ``unitary'' version is obtained: it is shown that a pair of $II_1$ subfactors $N\subset M$ of finite depth with (Jones) index $[M:N]=4$, then the principal graph of the inclusion must be the Coxeter graph $D_n^{(1)}$ provided the Perron-Frobenius eigenvector is restricted to have entries $\leq 2$.  See \cite{wenzlcstar} for the connection between unitary fusion categories and $II_1$ subfactors.
More recently in \cite[Theorem 1.1(ii)]{BN} a Hopf algebra version is proved, classifying subalgebras generated by subcoalgebras of dimension $4$ in terms of polyhedral groups.  Our results are for fusion categories, and none of the three versions imply each other.

\begin{remark}\label{nsdrmk}
 We can weaken the hypothesis of Theorem \ref{main} in the following way: remove (2), but insist that the generating object $X_1$ must be self-dual.  Then $\CC$ is still group-theoretical.  We may determine the possible fusion rules in much the same way as above.  First suppose that $X_1^{\ot 2}\cong\one\oplus Z_2\oplus Z_3\oplus Z_4$ with, say $Z_3$ non-self-dual.  Then $X_1$ self-dual implies $Z_3^*\cong Z_4$ and $Z_2^*\cong Z_2$ without loss of generality.  We then see that $X_1\ot Z_i\cong X_1$ exploiting the symmetries of the fusion coefficients $1=N_{X_1,X_1}^{Z_i}=N_{X_1,Z_i^*}^{X_1}$.  Thus in this case $\FPdim(\CC)=8$ and group-theoreticity follows (however, such a fusion category cannot be braided, see \cite{Sie}).  Next suppose that $X_1^{\ot 2}\cong \one\op Z_2\op X_2$ with $\FPdim(X_2)=2$.  Then $X_2$ must be self-dual.  As in the proof of Theorem \ref{main}, we have a minimal $k$ such that $X_1\ot X_i\cong X_{i-1}\op X_{i+1}$ for $i<k$ and either $X_1\ot X_k\cong X_{k-1}\op X_k$ or $X_1\ot X_k\cong X_{k-1}\op Z_3\op Z_4$ where $\FPdim(X_j)=2$ for all $j$ and $\FPdim(Z_i)=1$ for all $i$.  Observe that in either case each $X_k$ is self-dual (by induction).  So the only non-self-dual possibility is that $Z_3^*\cong Z_4$.  As in the proof of Theorem \ref{main}, this determines all fusion rules, and we see that $Gr(\CC)\cong Gr(\Rep(\Z_{k+1}\rtimes\Z_4))$ where the conjugation action of $\Z_4$ is by inversion.  By defining $A:=\one\oplus Z_2$ (and noting that $\one\oplus Z_3$ \emph{is not} an algebra) similar arguments as in proof of Theorem \ref{main}(ii) show that $\CC$ is group-theoretical, which we record in the following:

\begin{lem}\label{smalllemma}
Suppose $\CC$ is Grothendieck equivalent to $\Rep(\Z_k\rtimes\Z_4)$ where conjugation by the generator of $\Z_4$ acts by inversion on $\Z_k$.  Then $\CC$ is group-theoretical.
\end{lem}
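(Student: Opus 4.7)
The plan is to replay the proof of Theorem~\ref{main}(ii) with $A:=\one\op Z_2$, showing that $A\text{-bimod}\cong\CC^*_{\Rep(A)}$ is pointed; the Morita characterization of group-theoreticity (see \cite{O2}, as invoked after Proposition~\ref{dgnoprop2}) will then conclude. First I would extract the complete fusion data: alongside the rules in Remark~\ref{nsdrmk}, an easy induction via $X_1\ot X_i\cong X_{i-1}\op X_{i+1}$ yields $Z_3\ot X_i\cong X_{k-i}$, while $\{\one,Z_2,Z_3,Z_4\}$ forms a $\Z_4$ generated by $Z_3$ (with $Z_3^{\ot 2}\cong Z_2$, $Z_3^{\ot 3}\cong Z_4$). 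Comparing the count of invertible simples against $\Rep(\Z_k\rtimes\Z_4)$ also forces $k$ to be odd, a fact which is essential below.

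Next, $A$ acquires a unique indecomposable separable algebra structure, just as in \cite[p.\,3050]{EGO} and the proof of Theorem~\ref{main}(ii) (the only other plausible candidate $\one\op Z_3$ fails for want of self-duality of $Z_3$). I would then catalogue the simple right $A$-modules exactly as there: $A$ itself; two modules $M_i^\pm$ supported on each $X_i$ (since $Z_2\ot X_i\cong X_i$); and a single module $T$ supported on $Z_3\op Z_4$ (using $Z_2\ot Z_3\cong Z_4$). A Frobenius--Perron count confirms completeness. The bulk of the work is to show every simple $A$-bimodule is invertible. The argument of \cite[Lemma~6.2]{EGO} transcribes to each $X_i$ as soon as $\uHom(M_i^\pm,M_i^\pm)\cong A$, which reduces to showing $X_i\ot X_i$ contains $A$ but neither $Z_3$ nor $Z_4$. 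Granting this, one obtains four invertible bimodules per $X_i$. A parallel argument giving $\uHom(T,T)\cong A$ then endows $T$ with an invertible bimodule structure $T_1$, and $T_1\ot A'$ (where $A':=\ker(A\ot A\to A)$ is the other invertible bimodule underlying $A$) is a distinct invertible bimodule. Together with $A$ and $A'$, the total $2+4(k-1)+2=4k=\FPdim(\CC)$ exhausts $A\text{-bimod}$, so it is pointed.

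The principal obstacle, and the only new ingredient relative to Theorem~\ref{main}(ii), is the exclusion of $Z_3,Z_4$ from $X_i\ot X_i$ for every $i$, i.e., avoiding the degenerate index $X_{(k+1)/2}\ot X_{(k+1)/2}\cong\one\op Z_2\op Z_3\op Z_4$ that forced the special argument in the dihedral case. Here Frobenius reciprocity computes $\dim\Hom(X_i\ot X_i,Z_3)=\dim\Hom(X_i,X_{k-i})=\delta_{2i,k}$, which vanishes precisely because $k$ is odd, and symmetrically for $Z_4$. Once this is settled the remainder of the proof is a verbatim transcription of Theorem~\ref{main}(ii).
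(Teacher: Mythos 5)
Your proposal is correct and follows exactly the route the paper intends: the paper's own ``proof'' is just the remark that one should rerun the argument of Theorem~\ref{main}(ii) with $A=\one\oplus Z_2$ (noting $\one\oplus Z_3$ is not an algebra), which is what you do. You in fact supply the one detail the paper leaves implicit --- that oddness of $k$ kills the degenerate case $X_i^{\ot 2}\supseteq Z_3\oplus Z_4$ via $\dim\Hom(X_i\ot X_i,Z_3)=\delta_{2i,k}=0$, so every $\uHom(M,M)$ equals $A$ and no analogue of the bimodule $L$ from Theorem~\ref{main}(ii) is needed.
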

\end{remark}


We would like to point out that Theorem \ref{main} implies that any fusion category $\CC$ that is Grothendieck equivalent to $\Rep(D_k)$ is group theoretical.  Let us denote by $\GT$ the class of finite groups $G$ for which any fusion category $\CC$ in the Grothendieck equivalence class $\lan \Rep(G)\ra$ of $\Rep(G)$ is group-theoretical.
\begin{question}\label{grequivconj}
For which finite groups $G$ is it true that if $\CC$ is a fusion category that is Grothendieck equivalent to $\Rep(G)$ then $\CC$ is group-theoretical, i.e. which finite groups are in $\GT$?
\end{question}

It is certainly not the case that group-theoreticity is invariant under Grothendieck equivalence: \cite{GNN} contains an example of a non-group-theoretical category that is Grothendieck equivalent to the group-theoretical category $\Rep(D(S_3))$ (the representation category of the double of the symmetric group $S_3$).
However, it is possible that this holds for all finite groups $G$.
One can often use the technique of proof of Theorem \ref{main}(ii) to verify that a given group $G$ is in $\GT$.

The following gives some (scant) evidence that perhaps $\GT$ contains \emph{all} finite groups:
\begin{prop}
The following groups are in $\GT$:
\begin{enumerate}
\item $D_k$ (Theorem \ref{main})
 \item Any abelian group $A$
\item Any group $G$ with $|G|\in\{p^n,pq,pqr\}$ where $p,q$ and $r$ are distinct primes (Proposition \ref{dgnoprop1})
\item $G\times H$ for $G,H\in\GT$
\item all nilpotent groups (from the previous two)
\item $A_5$ (\cite[Theorem 9.2]{ENO2})
\item $\Z_{p^n}^\times\ltimes\Z_{p^n}$ $p$ prime (\cite[Corollary 7.4]{EGO})
\end{enumerate}
\end{prop}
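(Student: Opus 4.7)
The plan is to verify each item of the list separately, leaning on the indicated citations for most of them. Item (1) is simply Theorem \ref{main}, while items (6) and (7) are the cited consequences of \cite{ENO2} and \cite{EGO}. Item (3) follows immediately from Proposition \ref{dgnoprop1}: the FP-dimension depends only on the Grothendieck ring, so any $\CC$ Grothendieck equivalent to $\Rep(G)$ satisfies $\FPdim(\CC)=|G|$, and when $|G|\in\{p^n,pq,pqr\}$ group-theoreticity is immediate. For item (2), if $\CC$ is Grothendieck equivalent to $\Rep(A)$ for abelian $A$, then every simple object of $\CC$ has FP-dimension $1$, so $\CC$ is pointed and a fortiori group-theoretical.

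Item (4) is the main content. Given $\CC$ Grothendieck equivalent to $\Rep(G\times H)\cong\Rep(G)\boxtimes\Rep(H)$, I would extract fusion subcategories $\CC_G,\CC_H\subset\CC$ generated respectively by the simple objects labeled $V_i\boxtimes\one$ and $\one\boxtimes W_j$; these are Grothendieck equivalent to $\Rep(G)$ and $\Rep(H)$, hence group-theoretical by hypothesis. The fusion rules of $\Rep(G\times H)$ force every simple of $\CC$ to be isomorphic to a product $X_{i1}\otimes X_{1j}$, and the two subcategories centralize each other in the Grothendieck ring. The next step is to upgrade this fusion-ring-level factorization to a Deligne factorization $\CC\cong\CC_G\boxtimes\CC_H$, after which group-theoreticity of $\CC$ follows from the standard fact that the Deligne product of two group-theoretical fusion categories is group-theoretical (if $\CC_i$ is Morita equivalent to a pointed $\D_i$, then $\CC_G\boxtimes\CC_H$ is Morita equivalent to the pointed $\D_G\boxtimes\D_H$).

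Item (5) then follows by induction: every finite nilpotent group decomposes as a direct product of its Sylow $p$-subgroups, so item (3) covers each factor and item (4) covers the product. The main obstacle lies squarely at item (4), specifically in promoting the fusion-ring-level factorization into an honest Deligne factorization of fusion categories; this step requires verifying that the associativity constraint of $\CC$ splits across the two subcategories $\CC_G$ and $\CC_H$, which is not automatic from the fusion rules alone.
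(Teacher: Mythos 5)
The paper states this proposition with only parenthetical citations and no written proof, so there is nothing to match your argument against line by line; your treatments of items (1)--(3), (6) and (7) are correct and are exactly the intended justifications (FP-dimensions are determined by the Grothendieck ring, so a category Grothendieck equivalent to $\Rep(A)$ with $A$ abelian is pointed, and one Grothendieck equivalent to $\Rep(G)$ with $|G|\in\{p^n,pq,pqr\}$ is integral of that dimension, whence Proposition \ref{dgnoprop1} applies).

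The gap you flag at item (4) is, however, genuine and fatal to the route you propose: a factorization of the Grothendieck ring does \emph{not} upgrade to a Deligne factorization of the fusion category. A concrete obstruction already appears in the pointed case: take $\CC=\mathrm{Vec}_{\Z_2\times\Z_2}^{\omega}$ for a $3$-cocycle $\omega$ whose class does not lie in the image of $H^3(\Z_2)\times H^3(\Z_2)\to H^3(\Z_2\times\Z_2,\C^\times)$ (such classes exist by the K\"unneth formula). This $\CC$ is Grothendieck equivalent to $\Rep(\Z_2)\boxtimes\Rep(\Z_2)$, contains the two expected subcategories $\CC_G,\CC_H$, and every simple object factors through them, yet $\CC$ is not equivalent to any Deligne product $\CC_G\boxtimes\CC_H$ because the associator does not split. (It is still group-theoretical, but only because it is pointed --- i.e., because item (2) applies, not item (4) as you prove it.) So the "upgrade to a Deligne factorization" step is not merely unverified, it is false in general, and item (4) needs a different argument --- for instance one working directly with an algebra $A_G\ot A_H$ built from the Morita equivalences for the two factors, or some other structural input; the paper itself supplies none. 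Item (5) inherits this dependency, since the reduction of a nilpotent group to its Sylow subgroups goes through item (4).
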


We have the following application of Corollary \ref{maincor} and Lemma \ref{smalllemma}:

\begin{thm}\label{gradedBD}
For any $r$ the $0$-graded subcategories $\CC(B_r)_0$ and $\CC(D_r)_0$ are group-theoretical and hence have property $\F$.
\end{thm}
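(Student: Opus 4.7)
The plan is to realize each of $\CC(B_r)_0$ and $\CC(D_r)_0$ as a braided fusion category to which Theorem~\ref{main} applies (or, in the non-self-dual case, the variant of Remark~\ref{nsdrmk} / Lemma~\ref{smalllemma}), deduce group-theoreticity, and then invoke Proposition~\ref{erwprop} to obtain property~$\F$.

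By construction, the simple objects of $\CC(B_r)_0$ and $\CC(D_r)_0$ have $\FPdim\in\{1,2\}$, verifying hypothesis~(1) of Theorem~\ref{main}. Every simple object of any $\CC(B_r)$ is self-dual, giving hypothesis~(2) for $\CC(B_r)_0$; the same holds for $\CC(D_r)_0$ when $r$ is even. Hypothesis~(4), commutativity of the Grothendieck ring, is automatic from the braiding. What remains is to exhibit a self-dual simple object of FP-dimension $2$ that generates the category, so as to satisfy hypothesis~(3).

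The natural candidate is $X_{\ga^1}$. I would show that every simple object of the $0$-graded part is a subobject of some $(X_{\ga^1})^{\ot n}$ by computing $X_{\ga^1}\ot X_{\ga^j}$ iteratively via equation~(\ref{modeq}), using the explicit entries $\ts(\ga^i,\ga^j)=4\cos(2ij\pi/(2r+1))$ (type $B$) or $4\cos(ij\pi/r)$ (type $D$) together with the formula $\theta_\la=q^{\lan\la+2\rho,\la\ra}$ for the twists. Since $\FPdim(X_{\ga^1}\ot X_{\ga^j})=4$, only a short list of decompositions is a priori allowed, and the twist constraint from~(\ref{modeq}) forces the generic decomposition $X_{\ga^{j-1}}\oplus X_{\ga^{j+1}}$. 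Tracking the ``boundary'' values of $j$ at which this pattern breaks shows that the invertible objects $X_\zero$, $X_{2\la_1}$ (and in type $D$ also $X_{2\la_{r-1}}$, $X_{2\la_r}$) appear in small tensor powers of $X_{\ga^1}$. This exhibits $X_{\ga^1}$ as a generator and simultaneously identifies $Gr(\CC(B_r)_0)$ (resp.\ $Gr(\CC(D_r)_0)$ for $r$ even) with the fusion ring of $\Rep(D_n)$ for an appropriate dihedral group $D_n$, so that Theorem~\ref{main}(ii) yields group-theoreticity.

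When $r$ is odd in type $D$, the pointed objects $X_{2\la_{r-1}}$ and $X_{2\la_r}$ are dual to each other but not self-dual, so Theorem~\ref{main} does not apply verbatim. The generator $X_{\ga^1}$ remains self-dual, however, which places us in the setting of Remark~\ref{nsdrmk}: the same fusion-rule analysis identifies $Gr(\CC(D_r)_0)$ with $Gr(\Rep(\Z_k\rtimes\Z_4))$ (where $\Z_4$ acts on $\Z_k$ by inversion) for a suitable $k$, and Lemma~\ref{smalllemma} concludes group-theoreticity. In every case Proposition~\ref{erwprop} then delivers property~$\F$. The main obstacle is the bookkeeping required to pin down the fusion rules from~(\ref{modeq}) at the boundary indices where the inductive pattern breaks and the pointed simple objects first appear; the type~$D$ odd case, with its non-self-dual pointed objects forcing the $\Z_k\rtimes\Z_4$ identification, is the most delicate piece of the computation.
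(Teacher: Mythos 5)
Your argument is correct and lands in the same places as the paper's, but the paper gets there faster in the self-dual cases. For $\CC(B_r)_0$ and $\CC(D_r)_0$ with $r$ even the paper simply invokes Corollary \ref{maincor}, which requires only hypotheses (1) and (2) of Theorem \ref{main}: Lemma \ref{genlem} reduces everything to the simply generated subcategories, each of which automatically satisfies (3) and (4), so no identification of a global generator is needed. Your route instead verifies hypothesis (3) for the whole $0$-graded category by showing that $X_{\ga^1}$ tensor-generates it; this costs the fusion-rule bookkeeping you describe (which does go through --- the Fateev--Zamolodchikov Bratteli diagrams cited in Section \ref{2:qgcats} confirm the dihedral pattern $X_{\ga^1}\ot X_{\ga^j}\cong X_{\ga^{j-1}}\oplus X_{\ga^{j+1}}$ away from the boundary indices), but it buys something real: Theorem \ref{main}(ii) then yields group-theoreticity of $\CC(B_r)_0$ and $\CC(D_r)_0$ themselves, which is what the statement actually asserts, whereas Corollary \ref{maincor} as literally stated only delivers property $\F$ (via group-theoreticity of each simply generated subcategory). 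For $\CC(D_r)_0$ with $r$ odd your treatment coincides with the paper's: the self-dual generator $X_{\ga^1}$ together with the dual pair $X_{2\la_{r-1}}, X_{2\la_r}$ identifies $Gr(\CC(D_r)_0)$ with $Gr(\Rep(\Z_r\rtimes\Z_4))$, and Lemma \ref{smalllemma} concludes.
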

\begin{proof}
In the cases $\CC(B_r)_0$ and $\CC(D_r)_0$ with $r$ even the hypotheses of Corollary \ref{maincor} are satisfied since all objects are self-dual.  In the case $r$ is odd, one finds that $\CC(D_r)_0$ is Grothendieck equivalent to $\Rep(\Z_r\rtimes\Z_4)$ as in Lemma \ref{smalllemma} and the claim follows.
\end{proof}

\begin{remark}
 In contrast with group-theoreticity, having property $\F$ seems only to depend on the fusion rules of the category, not the deeper structures (such as specific braiding!).  We ask the following:
\begin{question}
 Is property $\F$ invariant under Grothendieck equivalence?
\end{question}
The truth of Conjecture \ref{mainconj} would answer this in the affirmative since integrality of a braided fusion category \emph{is} invariant under Grothendieck equivalence.  Moreover, if the answer is ``yes'' verifying property $\F$ would be made significantly easier.
\end{remark}

\subsection{FP-dimensions $pq^2$ and $pq^3$}\label{pq^n}

This subsection is partially a consequence of discussions with Dmitri Nikshych, to whom
we are very thankful.

The goal of this subsection is to show that any integral modular category of
dimension less than $36$ is group-theoretical, and hence has property $\F$.
We will need the following two propositions.

First recall that a fusion category is said to be {\em pointed} if all its simple
objects are invertible. For a fusion category $\CC$, we denote the full fusion
subcategory generated by the invertible objects by $\CC_{pt}$.

\begin{prop}
\label{pq^2}
Let $p$ and $q$ be distinct primes. Let $\CC$ be an integral modular category of
dimension $pq^2$. Then $\CC$ must be pointed (in particular group-theoretical).
\end{prop}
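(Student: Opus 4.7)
The plan is to prove the stronger statement that $\CC$ is actually pointed; group-theoreticity is then automatic. The argument has three steps: a restriction on Frobenius--Perron dimensions of simples, a divisibility count on $|G(\CC)|$, and a centralizer comparison that rules out the only remaining non-pointed possibility.

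First I would use the standard divisibility fact for modular categories: for every simple object $X$, the number $\FPdim(\CC)/\FPdim(X)^2$ is an algebraic integer, and since $\CC$ is assumed integral it is a rational integer. Thus $\FPdim(X)^2$ divides $pq^2$, and as $p,q$ are distinct primes the only possibilities are $\FPdim(X)\in\{1,q\}$. Letting $c$ denote the number of simples of dimension $q$ and $|G|:=|G(\CC)|$ the number of invertibles, the equation
$$|G|+cq^2=pq^2$$
forces $q^2\mid |G|$; combined with $|G|\mid pq^2$ this leaves only $|G|\in\{q^2,pq^2\}$.

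If $|G|=pq^2$ then $\CC=\CC_{pt}$ is pointed and we are done. Otherwise $|G|=q^2$, and I would derive a contradiction as follows. Since $\CC$ is modular, $\CC_{pt}$ and $\CC_{ad}$ are mutual M\"uger centralizers, so $\FPdim(\CC_{pt})\cdot\FPdim(\CC_{ad})=\FPdim(\CC)$, giving $\FPdim(\CC_{ad})=p$. A fusion category of prime dimension is pointed, so $\CC_{ad}$ contributes $p$ invertible simples, which must also lie in $G(\CC)$; but then $G(\CC_{ad})\subseteq G(\CC)$ is a subgroup of order $p$ inside a group of order $q^2$, impossible since $p\neq q$. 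The only non-routine ingredient I expect to need is the modular divisibility $\FPdim(X)^2\mid\FPdim(\CC)$; everything after that combines the centralizer identity and universal-grading facts already in the paper's toolkit.
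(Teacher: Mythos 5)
Your proposal is correct and follows essentially the same route as the paper: restrict simple dimensions to $\{1,q\}$ via the modular divisibility $\FPdim(X)^2\mid\FPdim(\CC)$, count to force $\FPdim(\CC_{pt})=q^2$, and then use M\"uger's centralizer dimension formula to produce a pointed subcategory of dimension $p$ whose invertibles cannot fit inside $G(\CC)$. The only cosmetic difference is that you phrase the centralizer of $\CC_{pt}$ as $\CC_{ad}$ (the two coincide for modular categories), whereas the paper works directly with $(\CC_{pt})'$ and phrases the final contradiction as $p\mid q^2$ rather than via Lagrange.
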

\begin{proof}
Suppose $\CC$ is not pointed. We will show that this leads to a contradiction.
By \cite[Lemma 1.2]{EG} (see also \cite[Proposition 3.3]{ENO}),
the possible dimensions of simple objects of $\CC$ are $1$ and $q$.
Let $l$ and $m$ denote the number of $1$-dimensional and $q$-dimensional
objects, respectively, of $\CC$. By dimension count we must have $l + m q^2 = pq^2$,
this forces $l = q^2$, so $\dim (\CC_{pt}) = q^2$.
By \cite[Theorem 3.2 (ii)]{Mu1}, $\dim ((\CC_{pt})') = p$, so $(\CC_{pt})'$ must
be pointed \cite[Corollary 8.30]{ENO}. Therefore, $(\CC_{pt})' \subset \CC_{pt}$,
which implies that $p$ divides $q^2$, a contradiction.
\end{proof}

Recall that a {\em grading} of a fusion category $\CC$
by a finite group $G$ is a decomposition
\[
\CC =\bigoplus_{g\in G}\, \CC_g
\]
of $\CC$ into a direct sum of full Abelian subcategories such that
$\otimes$ maps $\CC_g\times \CC_h$ to $\CC_{gh}$ for all $g,h\in G$.
The $\CC_g$'s will be called {\em components} of the $G$-grading of $\CC$.
A grading is said to {\em faithful} if $\CC_g\neq 0$ for all $g\in G$.
In the case of faithful grading, the FP-dimensions of the components
of the $G$-grading of $\CC$ are equal \cite[Proposition 8.20]{ENO}.

It was shown in \cite{GN} that every fusion category $\CC$ is faithfully graded by a
certain group called {\em universal grading group}, denoted $U(\CC)$.
The $U(\CC)-$grading $\displaystyle \CC = \bigoplus_{x \in U(\CC)} \CC_x$ is
called the {\em universal grading} of $\CC$. For a modular
category $\CC$, the universal grading group $U(\CC)$ of $\CC$
is isomorphic to the group of isomorphism
classes of invertible objects of $\CC$ \cite[Theorem 6.3]{GN}.

\begin{prop}
\label{pq^3}
Let $p$ and $q$ be distinct primes. Let $\CC$ be an integral modular category of
dimension $pq^3$. Then $\CC$ must be pointed (in particular group-theoretical).
\end{prop}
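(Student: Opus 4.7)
The plan is to mimic the argument for Proposition~\ref{pq^2}: assume for contradiction that $\CC$ is not pointed, and perform a case analysis on $l := \FPdim(\CC_{pt})$. By \cite[Lemma 1.2]{EG} (see also \cite[Proposition 3.3]{ENO}), every simple object of $\CC$ has FP-dimension $d$ with $d^2 \mid pq^3$, hence $d \in \{1, q\}$ (since $p^2 \nmid pq^3$ and $q^4 \nmid pq^3$). Writing $m$ for the number of $q$-dimensional simples, the dimension count $l + mq^2 = pq^3$ forces $q^2 \mid l$; combined with $l \mid pq^3$ and $l < pq^3$, this yields $l \in \{q^2,\, q^3,\, pq^2\}$.

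Two of the three cases are dispatched by direct analogs of the argument in Proposition~\ref{pq^2}. For $l = q^3$, M\"uger's theorem gives $\FPdim((\CC_{pt})') = p$, which is prime, so $(\CC_{pt})'$ is pointed and lies in $\CC_{pt}$; this forces the contradiction $p \mid q^3$. For $l = pq^2$, we have $\FPdim((\CC_{pt})') = q$, again pointed and contained in $\CC_{pt}$; here the contradiction comes from the universal grading of \cite[Theorem 6.3]{GN}, which says $|U(\CC)| = |\CC_{pt}| = pq^2$, so the trivial component $\CC_{ad} = (\CC_{pt})'$ has FP-dimension $q$. But $m = p(q-1) > 0$, so there is a simple $X$ of dimension $q$, and $X \ot X^* \in \CC_{ad}$ has FP-dimension $q^2$. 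Since $X$ is simple, Frobenius reciprocity bounds the multiplicity $n_Y$ of each simple $Y$ in $X \ot X^*$ by $\dim(Y)$, so $\FPdim(\CC_{ad}) \geq \sum_Y (\dim Y)^2 \geq \sum_Y n_Y \dim Y = q^2$, contradicting $\FPdim(\CC_{ad}) = q$.

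The remaining case $l = q^2$ is the main obstacle. Here $\FPdim((\CC_{pt})') = pq$ is not prime, so one cannot immediately conclude pointedness (e.g.\ a braided analog of $\Rep(\Z_p \rtimes \Z_q)$ would have these invariants), and the centralizer argument fails outright. The plan is to consider the action of the group $G := \CC_{pt}$ of order $q^2$ on the set of the $m = pq-1$ simple objects of dimension $q$ by tensoring. Orbits have size dividing $q^2$, hence size $1$, $q$, or $q^2$; since $pq - 1 \equiv -1 \pmod q$, the number of fixed points is $\equiv -1 \pmod q$, in particular at least $q - 1 \geq 1$. For any fixed simple $X$, every $g \in G$ satisfies $g \ot X \cong X$ and so appears as a summand of $X \ot X^*$ with multiplicity $1$; as $|G| = q^2 = \FPdim(X \ot X^*)$, dimension matching yields $X \ot X^* \cong \bigoplus_{g \in G} g$. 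This places $\CC_{pt}$ as a fusion subcategory of $\CC_{ad}$, so $q^2 = \FPdim(\CC_{pt})$ must divide $\FPdim(\CC_{ad}) = pq$, forcing $q \mid p$, the required contradiction. With all three cases eliminated, $\CC$ is pointed and hence group-theoretical.
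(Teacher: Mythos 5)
Your proof is correct, and while its skeleton (the three-way case split on $\FPdim(\CC_{pt})\in\{q^3,pq^2,q^2\}$ and the M\"uger centralizer argument for $q^3$) matches the paper, the decisive case $\FPdim(\CC_{pt})=q^2$ is handled by a genuinely different mechanism. The paper stays entirely inside the universal grading: each of the $q^2$ components has FP-dimension $pq$, and the congruence $a+bq^2=pq$ forces each component to contain at least $q$ invertibles (since $q\mid a$ and $a\neq 0$ as $q\nmid p$), giving at least $q^3>q^2$ invertibles in total. You instead run an orbit-counting argument for the action of the group $G=\CC_{pt}$ of order $q^2$ on the $pq-1$ simples of dimension $q$, extract a fixed point $X$, show $X\ot X^*\cong\bigoplus_{g\in G}g$ by Frobenius reciprocity and dimension matching, and conclude $\CC_{pt}\subset\CC_{ad}$, so that $q^2\mid pq$ by \cite[Proposition 8.15]{ENO} --- a divisibility contradiction. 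Both arguments use modularity only through $U(\CC)\cong\CC_{pt}$; the paper's is slightly shorter, while yours isolates a structurally interesting fact (a $G$-fixed $q$-dimensional simple absorbing all invertibles into $X\ot X^*$) that is reusable in other dimension counts. Your treatment of the case $\FPdim(\CC_{pt})=pq^2$ also reaches the paper's conclusion (a component of FP-dimension $q$ cannot contain a simple of dimension $q$), just routed through $\CC_{ad}=(\CC_{pt})'$ and an explicit multiplicity bound rather than the blanket statement about components; the opening clause there about $(\CC_{pt})'$ being pointed is superfluous but harmless.
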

\begin{proof}
Suppose $\CC$ is not pointed. We will show that this leads to a contradiction.
By \cite[Lemma 1.2]{EG} (see also \cite[Proposition 3.3]{ENO}),
the possible dimensions of simple objects of $\CC$ are $1$ and $q$.
By numerical considerations, there are three possible values for $\dim \CC_{pt}$:
$q^3,pq^2$, or $q^2$.

Case (i): $\dim \CC_{pt} = q^3$.
By \cite[Theorem 3.2 (ii)]{Mu1}, $\dim ((\CC_{pt})') = p$, so $(\CC_{pt})'$ must
be pointed \cite[Corollary 8.30]{ENO}. Therefore, $(\CC_{pt})' \subset \CC_{pt}$,
which implies that $p$ divides $q^3$, a contradiction.

Case (ii): $\dim \CC_{pt} = pq^2$.
In this case, the components of the universal grading of $\CC$ have dimensions
equal to $q$, so they can not accommodate an object of dimension $q$,
a contradiction.

Case (iii): $\dim \CC_{pt} = q^2$.
In this case, the components of the universal grading of $\CC$ have dimensions
equal to $pq$. By dimension count, each component must contain
at least $q$ invertible objects. Since there are $q^2$ components the previous
sentence implies that $\CC$ contains at least $q^3$ invertible objects,
a contradiction.
\end{proof}

Propositions \ref{dgnoprop1}, \ref{pq^2}, and \ref{pq^3} establish the following:

\begin{prop}
\label{dim<36 implies g-t}
Any integral modular category of dimension less than 36 is group-theoretical,
and hence has property $\F$.
\end{prop}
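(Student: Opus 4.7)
The plan is to reduce the statement to the three results already in hand: Propositions \ref{dgnoprop1}, \ref{pq^2}, and \ref{pq^3}. Together, these handle dimensions of the shapes $p^n$, $pq$, $pqr$, $pq^2$, and $pq^3$ (with $p,q,r$ distinct primes), and I claim every integer $d$ with $1 \le d \le 35$ has at least one of these shapes. Once the case analysis is done, group-theoreticity will give property $\F$ by Proposition \ref{erwprop}.

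First I would dispose of the prime powers in range,
$$\{1,2,3,4,5,7,8,9,11,13,16,17,19,23,25,27,29,31,32\},$$
by Proposition \ref{dgnoprop1}(1). The squarefree products of two distinct primes,
$$\{6,10,14,15,21,22,26,33,34,35\},$$
fall under \ref{dgnoprop1}(2), and the only squarefree product of three distinct primes below $36$, namely $30=2\cdot 3\cdot 5$, falls under \ref{dgnoprop1}(3). This leaves five integers to allocate to \ref{pq^2} and \ref{pq^3}: the factorizations $12=3\cdot 2^2$, $18=2\cdot 3^2$, $20=5\cdot 2^2$, and $28=7\cdot 2^2$ put these under Proposition \ref{pq^2}, while $24=3\cdot 2^3$ falls under Proposition \ref{pq^3}. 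Tallying the five lists accounts for all of $\{1,\ldots,35\}$.

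The only real content is the verification that the five prescribed shapes exhaust $\{1,\ldots,35\}$, i.e., that no integer in this range factors as $p^2q^2$, $p^2qr$, or anything more intricate. This is immediate from size considerations: the smallest integer of the form $p^2q^2$ with distinct primes is $2^2\cdot 3^2=36$, and the smallest of the form $p^2qr$ is $2^2\cdot 3\cdot 5=60$, so no stray factorizations intrude below $36$. There is no genuine obstacle; the ``hard part,'' such as it is, is noticing that the bound $35$ is sharp, since $36=2^2\cdot 3^2$ is precisely the smallest dimension not covered by any of \ref{dgnoprop1}, \ref{pq^2}, or \ref{pq^3}, so pushing the argument past $35$ would require a genuinely new input.
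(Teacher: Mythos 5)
Your proof is correct and is exactly the paper's argument: the paper simply states that Propositions \ref{dgnoprop1}, \ref{pq^2}, and \ref{pq^3} establish the result, leaving implicit the enumeration you spell out, and your check that every integer in $\{1,\ldots,35\}$ has one of the shapes $p^n$, $pq$, $pqr$, $pq^2$, $pq^3$ (with $36=2^2\cdot 3^2$ the first exception) is accurate.
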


\begin{example}\label{su3ex}
The following example illustrates: 1) that for integral braided fusion categories group-theoreticity is not necessary for property $\F$, 2) that hypotheses (3) and (4) of Theorem \ref{main} are not sufficient to conclude group-theoreticity and 3) that the assumption $\FPdim(\CC)<36$ of Proposition \ref{dim<36 implies g-t} is necessary.

Let $\CC=\CC(\mathfrak{sl}_3,e^{\pi\im/6},6)$ (in the notation of Section \ref{2:qgcats}).   This category has rank $10$ and $\dim(\CC)=36$.  We order the simple objects
$\one,X_3,X_3^*,Y,X_1,X_1^*,X_2,X_2^*,Z$ and $Z^*$, where $\dim(X_3)=1$, $\dim(X_1)=\dim(X_2)=\dim(Z)=2$ and $\dim(Y)=3$.  The $S$-matrix is of the form:
$\begin{pmatrix}
   A & B\\
B^t & C
  \end{pmatrix}$
where $$A=\begin{pmatrix}
          1 & 1 & 1& 3\\
1 & 1 & 1& 3\\
1 & 1 & 1& 3\\
3 & 3 & 3& -3\\
         \end{pmatrix},
B=2\begin{pmatrix}
    1&1&1&1&1&1\\
\om&1/\om& 1/\om& \om& \om& 1/\om\\
1/\om&\om& \om& 1/\om& 1/\om& \om\\
0&0&0&0&0&0
   \end{pmatrix}.$$ Here $\om=e^{2\pi\im/3}$
and $C_{i,j}=2\zeta^k$ where $\zeta=e^{\pi\im/9}$ and $\pm k\in\{1,5,7\}$.
The corresponding twists are: $$(1,1,1,-1,\zeta^4,\zeta^4,\zeta^{10},\zeta^{10},\zeta^{16},\zeta^{16}).$$

We claim that $\CC$ is not group-theoretical.
There are two tensor subcategories.  The first, $\D$, generated by $X_3$ has rank $3$ and the other is the centralizer $\D^\prime$ of $\D$ generated by $Y$.  The important fusion rules are $Y^{\ot 2}=\one\oplus X_3\oplus X_3^*\oplus 2Y$, and $X_3^{\ot 2}=X_3^*$.  We can see from the $S$-matrix that $\D$ is the only non-trivial symmetric subcategory.  Moreover, $(\D^\prime)_{ad}\not\subset\D$ since $Y\in\D^\prime$ is a subobject of $Y^{\ot 2}$ which is not in $\D$, so by Proposition \ref{dgnoprop2}, $\CC$ is not group theoretical.

This category is known to have property $\F$; we were made aware of this by Michael Larsen \cite{Lpc}.

\end{example}

\section{Applications to Doubled Tambara-Yamagami Categories}\label{4:tycats}

In \cite{TY} D.~Tambara and S.~Yamagami completely classified
fusion categories satisfying certain fusion rules in which all but one
simple object is invertible.
They showed that such categories are parameterized by triples
$(A,\chi, \tau)$, where $A$ is a
finite abelian group, $\chi$ is a nondegenerate symmetric bilinear form
on $A$, and $\tau$ is square root of $|A|^{-1}$.
We will denote the category associated to any such triple
by $\TY(A,\chi, \tau)$.
The category $\TY(A,\chi, \tau)$ is described as
follows. It is a skeletal category with simple objects $\{a\mid a\in
A\}$ and $m$, and tensor product
\[
a\ot b =ab,\quad a\ot m = m,\quad m \ot a=m,\quad m \ot m
=\bigoplus_{a\in A}\, a,
\]
for all $a, b\in A$ and the unit object $e\in A$.
The associativity constraints are defined via $\chi$.
The unit constraints are the identity maps.
The category $\TY(A,\chi,\tau)$ is rigid with $a^*=a^{-1}$ and $m^*=m$
(with obvious evaluation and coevaluation maps). It
has a canonical spherical structure with respect to
which categorical and Frobenius-Perron dimensions
coincide (i.e., $\TY(A,\chi,\tau)$ is pseudo-unitary). Therefore,
the Drinfeld center $D\TY(A,\chi,\tau)$ of $\TY(A,\chi,\tau)$
is a (pseudo-unitary)
modular category.
The following parameterization of simple objects of
$D\TY(A,\chi,\tau)$ can be deduced from \cite{Iz}:

\begin{prop}
\label{simples in DTY}
Simple objects of $D\TY(A,\chi,\tau)$ are parameterized as
follows:
\begin{enumerate}
\item[(1)] $2|A|$ invertible objects $X_{a, \delta}$, where $a\in A$ and
$\delta$ is a square root of $\chi(a,a)^{-1}$.
Also, $X^*_{a, \delta} = X_{a^{-1}, \delta}$;
\item[(2)] $\frac{|A|(|A|-1)}{2}$ two-dimensional objects $Y_{a,b}$,
where $(a,\,b)$ is an unordered pair of distinct objects in $A$.
Also, $Y^*_{a, b} = Y_{a^{-1}, b^{-1}}$;
\item[(3)] $2|A|$ objects  $Z_{\rho, \Delta}$ of dimension $\sqrt{|A|}$,
 where $\rho$ is a linear $\chi$-character of $A$ and
$\Delta$ is a square root of $\tau \sum_{x\in A}\, \rho(x)$.
\end{enumerate}
\end{prop}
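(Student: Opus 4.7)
The plan is to identify each simple object of $Z(\TY(A,\chi,\tau))$ with a pair $(V, c_{V,-})$ consisting of an underlying object $V\in\TY(A,\chi,\tau)$ together with a half-braiding $c_{V,-}:V\ot(-)\to (-)\ot V$, and to enumerate indecomposable such pairs by imposing the hexagon axiom on the generators $a\in A$ and $m$. Since every object of $\TY(A,\chi,\tau)$ is a direct sum of copies of the $a\in A$ and of $m$, the analysis splits into three cases according to the underlying object $V$; this computation is essentially carried out in \cite{Iz}, so the remaining task is to translate Izumi's tube-algebra formulation into the Drinfeld-center picture and to read off the parameters.

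For $V=a$ with $a\in A$, the half-braiding $c_{V,b}$ is rigid (up to the $\chi$-controlled associator) because $V\ot b=ba=b\ot V$, while $c_{V,m}=\delta\cdot\mathrm{id}_m$ is a scalar; the hexagon applied to $m\ot m=\bigoplus_{x\in A}x$ then forces $\delta^2=\chi(a,a)^{-1}$, producing the $2|A|$ invertibles $X_{a,\delta}$. For $V\cong a\op b$ with $a\neq b$ in $A$, the hexagon determines the off-diagonal part of $c_{V,m}$ uniquely, yielding the $\binom{|A|}{2}$ two-dimensional simples $Y_{a,b}$; the case $V\cong 2a$ reduces to the previous one and produces no new simples. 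For $V=m$, the scalars $c_{V,a}=\rho(a)\cdot\mathrm{id}_m$ define a function $\rho:A\to\C^\times$ which the hexagon forces to be a $\chi$-compatible character of $A$, while $c_{V,m}$ is a diagonal operator on $\bigoplus_{x\in A}x$ whose overall normalization $\Delta$ must satisfy $\Delta^2=\tau\sum_{x\in A}\rho(x)$; this produces the $2|A|$ simples $Z_{\rho,\Delta}$, each of dimension $\dim(m)=\sqrt{|A|}$.

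Completeness of the list is confirmed by the dimension count
$$2|A|\cdot 1 \;+\; \tfrac{|A|(|A|-1)}{2}\cdot 4 \;+\; 2|A|\cdot|A| \;=\; 4|A|^2 \;=\; \FPdim(\TY(A,\chi,\tau))^2 \;=\; \FPdim(Z(\TY(A,\chi,\tau))),$$
and the duality assertions follow by inverting the scalar data in the half-braidings (together with $a^*=a^{-1}$ and $m^*=m$ in $\TY(A,\chi,\tau)$). The principal obstacle is the hexagon calculation in the case $V=m$, where the associator (encoded by $\chi$) and the normalization of the isomorphism $m\ot m\cong\bigoplus_x x$ (encoded by $\tau$) must be tracked simultaneously in order to derive the equation $\Delta^2=\tau\sum_x\rho(x)$; this is precisely the explicit computation carried out in \cite{Iz}.
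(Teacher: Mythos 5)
Your proposal is correct and matches the paper's approach: the paper gives no independent proof, simply asserting that the parameterization ``can be deduced from \cite{Iz},'' and your half-braiding analysis together with the (correct) dimension count $4|A|^2=\FPdim(\TY(A,\chi,\tau))^2$ is exactly the standard way to carry out that deduction. The only substantive step you defer --- the hexagon computation for $V=m$ producing the $\chi$-character condition on $\rho$ and the constraint $\Delta^2=\tau\sum_x\rho(x)$ --- is the same step the paper outsources to \cite{Iz}.
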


We will use the following fusion rules \cite{Iz} in the sequel:

\begin{lem}
\label{fusion rules in DTY}
Set $Y_{a,a} := X_{a,\delta} \oplus X_{a,-\delta}$. Then
\begin{enumerate}
\item $X_{a, \delta} \ot X_{a', \delta'} = X_{aa',
\delta\delta'\chi(a,a')^{-1}}$.

\item $X_{a, \delta} \ot Y_{b,c} = Y_{ab,ac}$.

\item $Y_{a,b} \ot Y_{c,d} = Y_{ac,bd} \oplus Y_{ad,bc}$.

\end{enumerate}
\end{lem}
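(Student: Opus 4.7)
The plan is to use the explicit description of the half-braidings attached to each simple object of $D\TY(A,\chi,\tau)$ given in \cite{Iz}, together with the fact that the forgetful functor $F:D\TY(A,\chi,\tau)\to\TY(A,\chi,\tau)$ is monoidal. For each of the three items I would first identify the underlying $\TY$-object of the left-hand side using the fusion rules of $\TY(A,\chi,\tau)$, and then pin down the label on the right by computing the induced half-braiding.

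For (1), since $F(X_{a,\delta})=a$, the tensor product $X_{a,\delta}\otimes X_{a',\delta'}$ is an invertible object of $D\TY$ whose underlying object is $aa'$, and hence is necessarily of the form $X_{aa',\delta''}$ with $\delta''{}^2=\chi(aa',aa')^{-1}$. To identify $\delta''$ I would evaluate the induced half-braiding on a test object $b\in A$: comparing the $\TY$-associator on the triples $(a,a',b)$ and $(b,a,a')$ produces precisely the scalar $\chi(a,a')^{-1}$, yielding $\delta''=\delta\delta'\chi(a,a')^{-1}$. Bilinearity of $\chi$ then confirms that this is indeed a square root of $\chi(aa',aa')^{-1}$.

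For (2), the underlying $\TY$-object of $X_{a,\delta}\otimes Y_{b,c}$ is $a\otimes(b\oplus c)=ab\oplus ac$, a two-dimensional object with $ab\neq ac$ (since $b\neq c$). By Proposition \ref{simples in DTY}, the simple objects of $D\TY$ with such an underlying object are exactly the $Y_{ab,ac}$, so a dimension count (or direct verification that the induced half-braiding is irreducible) forces $X_{a,\delta}\otimes Y_{b,c}\cong Y_{ab,ac}$. For (3), the underlying object of $Y_{a,b}\otimes Y_{c,d}$ is $(a\oplus b)\otimes(c\oplus d)=ac\oplus ad\oplus bc\oplus bd$ of total dimension four. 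Grouping the four pointed simples into the pairs $\{ac,bd\}$ and $\{ad,bc\}$ produces the claimed decomposition $Y_{ac,bd}\oplus Y_{ad,bc}$, and matching half-braidings on each summand is a direct check from Izumi's formulas. The convention $Y_{a,a}:=X_{a,\delta}\oplus X_{a,-\delta}$ is designed exactly to absorb degenerate cases such as $ac=bd$, in which the corresponding $Y$ must reduce consistently to a sum of two invertibles, whose labels are determined by part (1).

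The main obstacle is the bookkeeping required to extract the scalar $\chi(a,a')^{-1}$ in (1), and in (3) to verify that the grouping into the pairs $\{ac,bd\}$ and $\{ad,bc\}$ is canonical and unambiguous (modulo the $Y_{a,a}$ convention). Once the explicit half-braiding data of \cite{Iz} are brought in, each of the three verifications reduces to a routine manipulation of the $\chi$-twisted associator of $\TY(A,\chi,\tau)$.
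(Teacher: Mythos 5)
The paper offers no proof of this lemma at all: it is quoted directly from Izumi \cite{Iz}, so there is no argument of the authors' to compare yours against. Your strategy --- describe the simples of $D\TY(A,\chi,\tau)$ as $\TY$-objects equipped with half-braidings, use monoidality of the forgetful functor to compute the underlying object of each tensor product, and then pin down the remaining label from the induced half-braiding --- is the standard and correct way to reconstruct these rules, and parts (2) and (3) go through essentially as you describe. (In (2) note that ``dimension count'' alone would still permit $X_{ab,\cdot}\oplus X_{ac,\cdot}$; what actually closes the case is that tensoring with the invertible object $X_{a,\delta}$ preserves simplicity, so the product is the unique two-dimensional simple with underlying object $ab\oplus ac$. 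In (3) the exclusion of the wrong pairings such as $Y_{ac,ad}\oplus Y_{bc,bd}$ is cleanest via Frobenius reciprocity against part (2), rather than by inspecting underlying objects alone.)

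The one concrete misstep is in (1): evaluating the induced half-braiding on a test object $b\in A$ cannot determine $\delta''$. The pointed components of the half-braiding of $X_{g,\delta}$ are $\chi(g,\cdot)$ and are independent of $\delta$; the parameter $\delta$ is precisely the half-braiding component at the object $m$ (a scalar, since $g\ot m=m=m\ot g$). So you must test at $m$, not at $b$. There the composition of the two half-braidings passes through the only nontrivial associator of $\TY(A,\chi,\tau)$, namely $\alpha_{a,m,a'}=\chi(a,a')\,\mathrm{id}_m$, and this is what produces the correction factor $\chi(a,a')^{-1}$ in $\delta''=\delta\delta'\chi(a,a')^{-1}$; your closing check that this is a square root of $\chi(aa',aa')^{-1}$ is then correct. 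With that substitution the sketch is a sound derivation of the lemma.
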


Note that $D\TY(A,\chi,\tau)$ admits a $\mathbb{Z}/2\mathbb{Z}$-grading:
$$
D\TY(A,\chi,\tau) = D\TY(A,\chi,\tau)_+ \oplus D\TY(A,\chi,\tau)_-,
$$
where $D\TY(A,\chi,\tau)_+$ is the full fusion subcategory generated by
objects $\{ X_{a,\delta},Y_{b,c}\}$ and $D\TY(A,\chi,\tau)_-$ is the
full abelian subcategory generated by objects $\{Z_{\rho, \Delta}\}$.

\begin{prop}
\label{DTY_+ is g-t}
The trivial component $D\TY(A,\chi,\tau)_+$ of $D\TY(A,\chi,\tau)$ (under the
$\mathbb{Z}/2\mathbb{Z}$-grading) is group-theoretical, and hence has property
$\mathbf{F}$.
\end{prop}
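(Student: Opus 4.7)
The plan is to exhibit $D\TY(A,\chi,\tau)_+$ as an equivariantization of a pointed braided fusion category by $\Z/2\Z$. Such an equivariantization is automatically Morita equivalent to a pointed fusion category, hence group-theoretical, so property $\mathbf{F}$ will then follow from Proposition \ref{erwprop}.

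First, I would identify the full subcategory $\mathcal{E}\subset D\TY(A,\chi,\tau)_+$ generated by the invertible object $X_{e,-1}$. From Lemma \ref{fusion rules in DTY}(1), together with $\chi(e,e)=1$, we get $X_{e,-1}^{\ot 2}\cong\one$, so $\mathcal{E}\simeq\Rep(\Z/2\Z)$ as a fusion category. Using the explicit half-braidings on simple objects of $D\TY(A,\chi,\tau)$ from \cite{Iz}, one verifies that $X_{e,-1}$ has trivial twist and that its squared braiding with every $X_{a,\delta}$ and every $Y_{a,b}$ equals the identity, so that $\mathcal{E}$ is Tannakian and lies in the M\"uger center of $D\TY(A,\chi,\tau)_+$.

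Next I would form the de-equivariantization $\mathcal{P}:=(D\TY(A,\chi,\tau)_+)_{\Z/2\Z}$, a braided fusion category of Frobenius--Perron dimension $|A|^2$. Using Lemma \ref{fusion rules in DTY}(1)--(2), tensor product with $X_{e,-1}$ sends $X_{a,\delta}\mapsto X_{a,-\delta}$ and fixes each $Y_{a,b}$. Hence the $|A|$ pairs $\{X_{a,\delta},X_{a,-\delta}\}$ each descend to a single invertible simple in $\mathcal{P}$, while each of the $\binom{|A|}{2}$ objects $Y_{a,b}$ splits into two invertible simples in $\mathcal{P}$. This yields $|A|+|A|(|A|-1)=|A|^2$ invertible simples, saturating $\FPdim(\mathcal{P})$; therefore $\mathcal{P}$ is pointed.

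Finally, since $D\TY(A,\chi,\tau)_+\simeq \mathcal{P}^{\Z/2\Z}$ with $\mathcal{P}$ pointed, the standard Morita equivalence between an equivariantization $\mathcal{P}^{\Z/2\Z}$ and the crossed product $\mathcal{P}\rtimes\Z/2\Z$ realizes $D\TY(A,\chi,\tau)_+$ as Morita equivalent to the pointed fusion category $\mathcal{P}\rtimes\Z/2\Z$, whose simple objects are pairs consisting of a simple of $\mathcal{P}$ and an element of $\Z/2\Z$. Thus $D\TY(A,\chi,\tau)_+$ is group-theoretical, and Proposition \ref{erwprop} gives property $\mathbf{F}$. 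The principal obstacle is the verification in the first step that $X_{e,-1}$ has trivial twist and lies in the M\"uger center of $D\TY(A,\chi,\tau)_+$ --- a routine but somewhat technical computation with the half-braiding formulas of \cite{Iz} --- together with checking that each $Y_{a,b}$ genuinely splits rather than descending as a two-dimensional simple object in $\mathcal{P}$.
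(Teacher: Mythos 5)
Your strategy is sound and, in substance, coincides with the alternative proof the authors themselves sketch in the remark immediately following this proposition: there $D\TY(A,\chi,\tau)_+$ is identified, via \cite[Section 4]{GNN}, with the $\Z/2\Z$-equivariantization of the pointed part of a braided $\Z/2\Z$-crossed category, and \cite[Theorem 3.5]{Nik} (equivariantizations of pointed categories are group-theoretical) finishes the job; you recover the same picture intrinsically by de-equivariantizing along $\langle X_{e,-1}\rangle$. The paper's \emph{primary} proof is genuinely different, and the difference is instructive: it uses the very same object, forming the algebra $\one\oplus X_{e,-1}$, but passes to the dual category of bimodules (the method of Theorem \ref{main}(ii)) rather than to the de-equivariantization. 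That buys independence from the braiding: to form the bimodule category one only needs $\one\oplus X_{e,-1}$ to be a semisimple indecomposable algebra, which follows from $X_{e,-1}\ot Y_{a,b}\cong Y_{a,b}$ as in \cite{EGO}, so one never has to check that $X_{e,-1}$ has trivial twist and centralizes all of $D\TY(A,\chi,\tau)_+$ --- exactly the step you flag as your principal obstacle. That step is true (it follows from Izumi's half-braidings, or conceptually from the GNN equivariantization, since the centralizer of the trivial component of the grading has dimension $2$ and the only invertibles centralizing every $Y_{b,c}$ are $X_{e,\pm 1}$), but your proof cannot dispense with it, whereas the paper's can. The price the paper pays is the case $a^2=b^2$, where a hypothetical simple bimodule $M_1\oplus M_2$ of dimension $2$ must be ruled out by tensoring against an invertible bimodule supported on some $Y_{c,d}$ with $c^2\neq d^2$; in your right-module picture the analogous worry about $Y_{a,b}$ failing to split is in fact vacuous, since $Y_{a,b}\ot X_{e,-1}\cong Y_{a,b}$ forces $Y_{a,b}\ot(\one\oplus X_{e,-1})$ to have a two-dimensional commutative endomorphism algebra, hence to be a sum of two non-isomorphic simple modules whose Frobenius--Perron dimensions are each at least $1$ and sum to $2$. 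So: correct modulo the centrality computation you defer, equivalent to the paper's remark, and complementary to the paper's main argument.
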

\begin{proof}
The proof is similar to that of Theorem \ref{main}(ii).  We take the algebra $A=\one\oplus X$ where $X:=X_{e,-1}$.  By computing $\uHom$, each simple object of the form $Y_{a,b}$ corresponds to $4$ invertible $A$-bimodules unless $a^2=b^2$, in which case:
$$Y_{a,b}\ot Y_{a,b}^*=\one\oplus X\oplus X_{ab^{-1},\delta}\oplus X_{ab^{-1},-\delta}.$$  Let $M_1,M_2$ be the simple $A$-modules with $M_i=Y_{a,b}$ and $a^2=b^2$, and suppose that $X\ot M_1=M_2$.  Then $L=M_1\oplus M_2$ is a simple $A$-bimodule with $\FPdim(L)\geq 2$.  Let $N$ be an invertible $A$-bimodule with $N=Y_{c,d}$ for some $c,d$ with $c^2\not=d^2$.  Then $N\ot L$ is a subobject of $2(Y_{a,b}\ot Y_{c,d})=2(Y_{ac,bd}\oplus Y_{ad,bc})$.  But $(ad)^2\not=(bc)^2$ and $(ac)^2\not=(bd)^2$ as $c^2\not=d^2$, so all sub-bimodules of $N\ot L$ are invertible and in particular $N\ot L$ is not simple.  This is a contradiction, so we must have $\uHom(M_i,M_i)=A$, and $Y_{a,b}$ corresponds to $4$ invertible $A$-bimodules in all cases.

Finally we observe that each $X_{a,\delta}\oplus X_{a,-\delta}$ has two $A$-bimodule structures, each of which is invertible.  Thus the dual to $D\TY(A,\chi,\tau)_+$ with respect to $\Rep(A)$ is pointed, and the proposition is proved.

\end{proof}

\begin{remark}
Here is another proof of Proposition \ref{DTY_+ is g-t}: In \cite[Section 4]{GNN},
it was shown that $D\TY := D\TY(A,\chi,\tau)$ is equivalent to a
$\mathbb{Z}/2\mathbb{Z}$-equivariantization of a certain fusion category
$\E$ (which we describe below), i.e., $D\TY \cong \E^{\mathbb{Z}/2\mathbb{Z}}$. It follows from
the arguments in \cite[Section 4]{GNN} that the trivial component $D\TY_+$
is equivalent to the $\mathbb{Z}/2\mathbb{Z}$-equivariantization of
the pointed part of $\E$, i.e., $D\TY_+ \cong (\E_{pt})^{\mathbb{Z}/2\mathbb{Z}}$.
It follows from \cite[Theorem 3.5]{Nik} that equivariantizations of pointed categories
are group-theoretical; therefore, $D\TY_+$ is group-theoretical.
Let us describe the aforementioned fusion category $\E$ specifically:
Let $\TY = \TY(A,\chi,\tau)$ and let $\TY_{pt}$ denote the pointed
part of $\TY$. Then $\E = Z_{\TY_{pt}}(\TY)$, the {\em relative center}
(see \cite[Subsection 2.2]{GNN}) of $\TY$. Note that $\E$ is a
braided $\Z/2\Z$-crossed fusion category in the sense of \cite{T}.
\end{remark}

\begin{remark}
\label{C gt iff D(C) gt}
Let $\CC$ be a fusion category. It is well known that $\CC$ is
group-theoretical if, and only if, its Drinfeld center $Z(\CC)$ is group-theoretical.
To see this, recall that the class of group-theoretical categories is closed
under tensor product, taking the opposite category, and taking duals \cite{ENO}.
Also recall that a full fusion subcategory of a group-theoretical category
is group-theoretical \cite[Proposition 8.44 (i)]{ENO}. The assertion in the
second sentence above now follows from the fact that $Z(\CC)$ is dual to
$\CC \boxtimes \CC^{\text{op}}$ \cite[Proposition 2.2]{O2}.
\end{remark}

Let $\chi$ be a nondegenerate symmetric bilinear form on an abelian
group $A$. A subgroup $L\subset A$ is {\em Lagrangian} if $L=L^\perp$
with respect to the inner product on $A$ given by $\chi$.
It was shown in \cite{GNN} that the category $\TY(A,\chi,\tau)$ is
group-theoretical if, and only if, $A$ contains a Lagrangian subgroup.
This (together with Remark \ref{C gt iff D(C) gt}) establishes the following proposition.

\begin{prop}
\label{DTY g-t}
If $A$ contains a Lagrangian subgroup, then $D\TY(A,\chi,\tau)$
is group-theoretical, and hence has property $\mathbf{F}$.
\end{prop}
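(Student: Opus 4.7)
The proof plan is essentially to chain together three facts already established or cited in the excerpt, so there is no serious obstacle; the work has all been done in the preceding remarks. I would proceed as follows.

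First, I would invoke the characterization from \cite{GNN} of when a Tambara--Yamagami category is group-theoretical: $\TY(A,\chi,\tau)$ is group-theoretical if and only if $A$ contains a Lagrangian subgroup $L$ with respect to the inner product on $A$ induced by $\chi$. Thus the hypothesis of the proposition immediately gives us that $\TY(A,\chi,\tau)$ itself is group-theoretical.

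Next, I would pass from $\TY(A,\chi,\tau)$ to its Drinfeld center $D\TY(A,\chi,\tau)=Z(\TY(A,\chi,\tau))$ using Remark \ref{C gt iff D(C) gt}, which asserts that a fusion category $\CC$ is group-theoretical if and only if $Z(\CC)$ is group-theoretical. Applying the ``only if'' direction of this equivalence to $\CC=\TY(A,\chi,\tau)$, we conclude that $D\TY(A,\chi,\tau)$ is group-theoretical.

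Finally, since $D\TY(A,\chi,\tau)$ is a braided group-theoretical fusion category, Proposition \ref{erwprop} (from \cite{ERW}) immediately yields that it has property $\mathbf{F}$. The entire argument is a two-line chain: Lagrangian subgroup $\Rightarrow$ $\TY$ group-theoretical $\Rightarrow$ $Z(\TY)=D\TY$ group-theoretical $\Rightarrow$ property $\mathbf{F}$. There is no real obstacle; the content of the proposition lies entirely in the results of \cite{GNN} and \cite{ERW} invoked above.
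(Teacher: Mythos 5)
Your argument is correct and coincides exactly with the paper's own: the proposition is established by combining the Lagrangian-subgroup criterion from \cite{GNN} with Remark \ref{C gt iff D(C) gt} and then applying Proposition \ref{erwprop}. Nothing is missing.
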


\begin{example}
(i) Let $n$ be any positive integer and let $\xi \in \mathbb{C}$ be a primitive $n$-th
root of unity. Define a nondegenerate symmetric bilinear form $\chi$
on $\Z_n \times \Z_n$:
$$
\chi : (\Z_n \times \Z_n)
\times (\Z_n \times \Z_n)
\to \mathbb{C}^\times: ((x_1, x_2), (y_1, y_2)) \mapsto \xi^{x_1y_2+y_1x_2}.
$$
Then $\Z_n \times \Z_n$ contains
a Lagrangian subgroup (for example, $\Z_n \times \{0\}$).
Therefore, $D\TY(\Z_n \times \Z_n,\chi,\tau)$
has property $\mathbf{F}$ by Proposition \ref{DTY g-t}.\\

(ii) Let $A$ be an abelian group of order $2^{2t}$ and let $\chi$ be any
nondegenerate symmetric bilinear form on $A$. Then $A$ contains a Lagrangian
subgroup. Therefore, $D\TY(A,\chi,\tau)$
has property $\mathbf{F}$ by Proposition \ref{DTY g-t}.\\

(iii) Let $n$ be any positive integer. Let $\chi$ be {\em any} nondegenerate
symmetric bilinear form on $\Z_{n^2}$. Then $\Z_{n^2}$ contains a Lagrangian
subgroup: let $x$ be a generator of $\Z_{n^2}$, then the subgroup
$\langle x^n \rangle \leq \Z_{n^2}$ is Lagrangian. Therefore, $D\TY(\Z_{n^2},\chi,\tau)$
has property $\mathbf{F}$ by Proposition \ref{DTY g-t}.
\end{example}

\begin{remark}
The weakly integral categories $\CC(B_r)$ and $\CC(D_r)$ seem to be related to the weakly integral categories $D\TY(A,\chi,\tau)$.  One can show that $D\TY(A,\chi,\tau)$ for $|A|$ odd decomposes as a tensor product of a pointed modular category of rank $|A|$ and a modular category having the same fusion rules as $\CC(B_r)$ with $2r+1=|A|$ (note that $D\TY(A,\chi,\tau)$ has rank $\frac{|A|(|A|+7)}{2}$ so that $\frac{|A|+7}{2}=r+4$ which is the rank of $\CC(B_r)$).  It seems likely that $\CC(B_r)$ is equivalent to a subcategory of $D\TY(A,\chi,\tau)$ for some choice of $\chi$ and $\tau$.  The relationship with $\CC(D_r)$ is less clear, but it would be interesting to determine some precise equivalences.
\end{remark}


\begin{thebibliography}{9999}
\bibitem{andersen} H.\ H.\ Andersen, \emph{Tensor products of quantized
tilting modules}, Comm. Math. Phys. \textbf{149} (1991), 149-159.

\bibitem{BK} B.\ Bakalov and A.\ Kirillov, Jr., Lectures on
Tensor Categories and Modular Functors, University Lecture
Series, vol.\ {\bf 21},  Amer.\ Math.\ Soc., 2001.

\bibitem{BN} J.\ Bichon and S.\ Natale, \emph{Hopf algebra deformations of binary polyhedral groups}, arXiv:0907.1879.
\bibitem{Das}S.\ Das Sarma; M.\ Freedman; C.\ Nayak; S.\ H.\ Simon,.; A.\ Stern,
\emph{Non-Abelian Anyons and Topological Quantum Computation,} Rev. Modern Phys.  \textbf{80}  (2008),  no. 3, 1083--1159.
\bibitem{DGNO} V.\ Drinfeld, S.\ Gelaki, D.\ Nikshych and V.\ Ostrik, \emph{Group-theoretical properties of nilpotent modular categories},
arXiv:0704.0195.

\bibitem{EG} P.\ Etingof and S.\ Gelaki,
\emph{Some properties of finite-dimensional semisimple Hopf algebras},
Mathematical Research Letters \textbf{5} (1998), 191-197.


\bibitem{EGO} P.\ Etingof, S.\ Gelaki and V.\ Ostrik,
\emph{Classification of fusion categories of dimension $pq$.}
Int. Math. Res. Not. \textbf{2004}, no. 57, 3041--3056.


\bibitem{ENO} P.\ Etingof, D.\ Nikshych, and V.\ Ostrik, \emph{On fusion categories}, Ann. of Math. (2) \textbf{162} (2005), no. 2, 581-642.

\bibitem{ENO2} P.~Etingof, D.~Nikshych, V.~Ostrik,
\textit{Weakly group-theoretical and solvable fusion categories},
arXiv:0809.3031.

\bibitem{EO} P.~Etingof and V.~Ostrik,
\textit{Finite tensor categories},
Moscow Math. J. \textbf{4} (2004), no. 3, 627-654.

\bibitem{ERW} P.~Etingof, E.\ C.\ Rowell, S.~Witherspoon,
\textit{Braid group representations from quantum doubles of finite groups},
Pacific J. Math. \textbf{234} (2008), no. 1, 33–41.

\bibitem{fink} M. Finkelberg, \textit{An equivalence of fusion categories.}  Geom. Funct. Anal.  \textbf{6}  (1996),  no. 2, 249--267.
\bibitem{FRW} J.\ M.\ Franko, E.\ C.\ Rowell and Z.\ Wang, \emph{Extraspecial 2-groups and images of braid group representations.}  J. Knot Theory Ramifications \textbf{15} (2006),  no. 4, 413--427.
\bibitem{FLW} M.\ H.\ Freedman, M.\ J.\ Larsen and Z.\ Wang,
    \emph{The two-eigenvalue problem and density of Jones representation of braid groups.}
    Comm.\ Math.\ Phys.\ {228} (2002), 177-199.
\bibitem{Gan} T.\ Gannon, \emph{The level 2 and 3 modular invariants for the orthogonal algebras}, Canad. J. Math. \textbf{52} (2000) no. 3, 503-521.

\bibitem{GN} S.~Gelaki, D.~Nikshych,
\textit{Nilpotent fusion categories},
Advances in Mathematics \textbf{217} (2008) 1053-1071.

\bibitem{GNN} S.~Gelaki, D.~Naidu, and D.~Nikshych,
\textit{Centers of graded fusion categories},
arXiv:0905.3117

\bibitem{GHJ} F.\ M.\ Goodman, P.\ de la Harpe and V.\ F.\ R.\ Jones, Coxeter graphs and towers of algebras.  MSRI Publications 14, Springer-Verlag, New York 1989.
\bibitem{Iz} M.\ Izumi, \emph{The structure of sectors associated with
Longo-Rehren inclusions. II. Examples.} Rev. Math. Phys. 13
(2001), no. 5, 603--674.

\bibitem{jones86} V.\ F.\ R.\ Jones, \emph{Braid groups, Hecke algebras and type ${\rm II}\sb 1$ factors} in
Geometric methods in
 operator algebras (Kyoto, 1983), 242--273, Pitman Res. Notes Math. Ser., 123, Longman Sci. Tech.,
 Harlow, 1986.
\bibitem{jonescmp} V.\ F.\ R.\ Jones, \textit{On a certain value of the Kauffman polynomial.}
Comm. Math. Phys. \textbf{125} (1989), no. 3, 459--467.
\bibitem{jonessurvey} V.\ F.\ R.\ Jones, Notes on subfactors and statistical mechanics.  Braid group, knot theory and statistical mechanics,  1--25, Adv. Ser. Math. Phys., 9, World Sci. Publ., Teaneck, NJ, 1989.

\bibitem{JL} D.~Jordan and E.~Larson,
\textit{On the classification of certain fusion categories},
arXiv:0812.1603.
\bibitem{Lpc} M.\ J.\ Larsen, \emph{private communication}.
\bibitem{LR}  M.\ J.\ Larsen, E.\ C.\ Rowell, \emph{An algebra-level version of a link-polynomial identity of Lickorish}. Math.
Proc. Cambridge Philos. Soc. \textbf{144} no. 3 (2008) 623-638.



\bibitem{LRW} M.\ J.\ Larsen, E. C. Rowell, Z. Wang, \emph{The $N$-eigenvalue problem
and two applications} Int. Math. Res. Not.  \textbf{2005},  no. 64, 3987--4018.
\bibitem{Mu1} M.\ M\"uger,
\textit{On the structure of modular categories},
Proc. London Math. Soc. (3) \textbf{87} (2003), 291-308.
\bibitem{Nat}  S.\ Natale \textit{On group theoretical
Hopf algebras and exact factorizations of finite groups},  J. Algebra
\textbf{270}  (2003),  no. 1, 199--211.

\bibitem{Nik} D.\ Nikshych,
\textit{Non group-theoretical semisimple Hopf algebras from group
actions on fusion categories},
Selecta Mathematica, \textbf{14} (2008) 145-161.

\bibitem{O1} V.\ Ostrik, \textit{Module categories, weak Hopf algebras and modular invariants.}  Transform. Groups  \textbf{8}  (2003),  no. 2, 177--206.

\bibitem{O2} V. Ostrik,
\textit{Module categories over the Drinfeld double of a finite group},
Int.\ Math.\ Res.\ Not., 2003, no. 27, 1507-1520.

\bibitem{Survey} E.\ C.\ Rowell \emph{From quantum groups to unitary modular tensor categories}
in Contemp.\ Math.\ \textbf{413} (2006), 215--230.

\bibitem{RJPAA}  E.\ C.\ Rowell, \textit{Unitarizability of premodular categories}. J. Pure Appl. Algebra, \textbf{212} (2008) no. 8, 1878--1887.
\bibitem{twoparas} E.\ C.\ Rowell, \emph{Two paradigms for topological quantum computation}, in Contemp.\ Math.\ \textbf{482} (2009), 165--177.
\bibitem{RSW} E.\ Rowell; R.\ Stong; Z.\ Wang, \textit{On classification of modular tensor categories}, to appear in Comm. Math. Phys. arXiv:0712.1377.
\bibitem{RT} E. C. Rowell and I. Tuba, \emph{Finite linear quotients of $\mathcal{B}_3$ of low dimension}, to appear in J. Knot Theory Ramifications,  arXiv:0806.0168.

\bibitem{Sie} J.\ A.\ Siehler, \textit{Braided Near-group Categories.} math.QA/0011037.
\bibitem{TY} D.~Tambara and S.~Yamagami,
\textit{Tensor categories with
fusion rules of self-duality for finite abelian groups}, J.~Algebra
\textbf{209} (1998), no. 2, 692-707.

\bibitem{Tur} V.\ Turaev, Quantum Invariants of Knots and 3-Manifolds, De Gruyter Studies in
Mathematics, Walter de Gruyter (July 1994).

\bibitem{T} V.~Turaev,
\textit{Crossed group-categories},
Arabian Journal for Science and Engineering, \textbf{33}, no. 2C, 484-503 (2008).

\bibitem{wenzlcstar} H.\ Wenzl, \emph{$C^*$ tensor categories from quantum groups},
J of AMS, \textbf{11} (1998) 261-282.

\end{thebibliography}
\end{document}